\numberwithin{figure}{section}
\theoremstyle{plain}
      \newtheorem{thm}{Theorem}[section] 
      \newtheorem{cor}{Corollary}[thm] 
      \newtheorem{lem}[thm]{Lemma}
      \newtheorem{prop}[thm]{Proposition} 
\theoremstyle{definition}
      \newtheorem{defn}[thm]{Definition}
\theoremstyle{remark}
      \newtheorem{re}{Remark}[thm]
      \newtheorem{com}{Comment}[thm] 
\begin{document}

\title{An Iterative Approximation of the Sublinear Expectation of an Arbitrary Function of $G$-normal
Distribution and the Solution to the Corresponding $G$-heat Equation}

\author{Yifan Li, Reg Kulperger}
\maketitle
\begin{abstract}

It has been a well-known problem in the $G$-framework that it
is hard to compute the sublinear expectation of the $G$-normal distribution
$\expt[\varphi(X)]$ when $\varphi$ is neither convex nor concave, if not
involving any PDE techniques to solve the corresponding $G$-heat
equation.  Recently, we have established an efficient iterative method able to compute the sublinear expectation of \emph{arbitrary} functions of the $G$-normal distribution, which directly applies the \emph{Nonlinear Central Limit Theorem} in the $G$-framework to a sequence of variance-uncertain random variables following the \emph{Semi-$G$-normal Distribution}, a newly defined concept with a nice \emph{Integral Representation}, behaving like a ladder in both theory and intuition, helping us climb from the ground of classical normal distribution to approach the peak of $G$-normal distribution through the \emph{iteratively maximizing} steps. 
The series of iteration functions actually produce the whole \emph{solution surface} of the $G$-heat equation on a given time grid. 

\end{abstract}

\newpage

\tableofcontents

\newpage

\section{Introduction}

(This is a short version and will be expanded to a longer version for general audience.) 


There is a long existing thinking gap between classical normal and $G$-normal distribution. 

For instance, the result
\[
\expt[\varphi(\GN(0,\varI))]\geq \sup _{v\in\sdInt} \lExpt[\varphi\big(N(0,\sigma^2))]
\]
indicates that the uncertainty set of $G$-normal distribution is much larger that a class of linear normal distributions with $\sigma\in\sdInt$. Especially, the strict inequality
\[
\expt[\big(\GN(0,\varI)\big)^3]>0=E[\big(N(0,\sigma^2)\big)^3]>-\expt[-\big(\GN(0,\varI)\big)^3]
\]
tells us that the ``skewness'' of $G$-normal distribution becomes uncertain so its ``symmetry'' is uncertain, which is quite counter-intuitive for a ``normal'' distribution. 
Is it possible to understand the $G$-normal distribution from our familiar classical normal distribution? In other words, is it possible to use the linear expectation of linear normal distribution (or the linear heat equation) to approach the sublinear expectation of $G$-normal distribution (or the nonlinear $G$-heat equation)? 
This paper gives positive answers to both of these questions by providing a ladder from the ground of $N(0,\sigma^2)$ to approach the peak of $\GN(0,\varI)$: the \emph{Semi-$G$-normal} distribution $\semiGN(0,\varI)$, which is a classical normal distribution scaled by a sublinear ``constant'' (the maximal distribution). 

\section{Preliminaries}
The $G$-expectation or sublinear expectation framework (also called the $G$-framework), motivated by the problems with \emph{ambiguity or uncertainty}, is a generalization of the linear probability framework. 
Similar to the Choquet expectation, the sublinear expectation can be represented as the supreme of a class of linear expectations.
Intuitively, the linear expectation mainly considers the ``average'', while the sublinear expectation focuses on the ``bound'' to create a interval to cover the uncertainty which is hard to be described by a certain distribution. Please turn to \cite{peng2007g, peng2008multi} for more details. 
\begin{defn}
A \emph{sublinear expectaion space} is defined as a triple $\exptSpace$.
 $\Omega$ is a given set
(also known as a sample space). 
$\mathcal{H}$ is a linear space of
real valued functions defined on $\Omega$ satisfying $c\in\mathcal{H}$
for each constant $c$ and $|X|\in\mathcal{H}$ if $X\in\mathcal{H}$,
which can be regarded as the space of random variables. 
$\expt$
is a \emph{sublinear expectation} which is a functional $\expt: \mathcal{H} \to \R^d$
satisfying: 
\begin{enumerate}
\item Monotonicity: $\expt[X]\geq\expt[Y]$ if $X\geq Y$;
\item Constant preserving: $\expt[c]=c$ for $c \in \R^d$;
\item Sub-additivity: For each $X,Y\in\mathcal{H}$, $\expt[X+Y]\leq\expt[X]+\expt[Y]$;
\item Positive homogeneity: $\expt[\lambda X]=\lambda\expt[X]$ for $\lambda\geq0$.
\end{enumerate}
If only monotonicity and constant preserving are satisfied, $\expt$ is called a nonlinear
expectation and $\exptSpace$ is called a nonlinear
expectation space. 

\end{defn} 
In the following context, we often use capital letters like $X\coloneqq (X_1,X_2,\dotsc,X_d),d\in \N_+$ to denote the random variables (or vectors) in $\mathcal{H}$. 
Meanwhile, if $X \in\mathcal{H}$, we also have $\varphi(X)\in\mathcal{H}$ for every $\varphi$ in $C_{l.Lip}(\R^d)$ which is the linear space of functions satisfying the locally Lipchistz property:
	\[
	|\varphi(x)-\varphi(y)|\leq C_\varphi |1+|x|^k+|y|^k||x-y|
	\]
	for $x,y\in\R^d$, some $k\in\N$ and $C_\varphi>0$ depending on $\varphi$. If not specified, we will always stay in the \emph{sublinear expectation space} $\exptSpace$ and the function space $C_{l.Lip}(\R^d)$ (or $C_{l.Lip}$ in short, which can be replaced by other spaces).
Our computation in this space is usually different from the linear expectation $\lExpt$ mainly because of the sub-additivity and positive homogeneity of $\expt$. Here are some useful tools to understand and deal with $\expt$. 	
\begin{prop} 
\label{mean-certain-1}
If $X$ has the mean-uncertainty: $\meanl<\meanr$ with $\meanl \coloneqq -\expt[-X]$ and $\meanr\coloneqq\expt[X]$, for $\lambda \in \R$, we will have 
\[
\expt[\lambda X] = \begin{cases}
	\lambda \expt[X] &\lambda\geq 0\\
	-\lambda \expt[-X] &\lambda<0
\end{cases} = \lambda^+\meanr - \lambda^-\meanl
\]
where $\lambda^+\coloneqq\max\{\lambda,0\}$ and $\lambda^-\coloneqq\max\{-\lambda,0\}$. 
\end{prop}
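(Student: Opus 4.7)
The plan is to split into the two cases $\lambda \geq 0$ and $\lambda < 0$, and in each case reduce the claim to a direct application of the positive homogeneity axiom, using the fact that positive homogeneity only applies to nonnegative scalars and so the negative case has to be handled by pulling the sign inside the argument.

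First I would handle $\lambda \geq 0$: this is immediate since positive homogeneity gives $\expt[\lambda X] = \lambda \expt[X] = \lambda \meanr$, and simultaneously $\lambda^+ = \lambda$, $\lambda^- = 0$, so the unified formula $\lambda^+\meanr - \lambda^-\meanl$ matches. Next, for $\lambda < 0$, I would write $\lambda X = (-\lambda)(-X)$ with $-\lambda > 0$, and apply positive homogeneity to the nonnegative scalar $-\lambda$ against the random variable $-X \in \mathcal{H}$ (which lies in $\mathcal{H}$ because $\mathcal{H}$ is a linear space). This yields
\[
\expt[\lambda X] = \expt[(-\lambda)(-X)] = (-\lambda)\,\expt[-X] = -\lambda\,\expt[-X].
\]
Substituting the definition $\expt[-X] = -\meanl$ gives $\expt[\lambda X] = \lambda \meanl$, and since $\lambda^+ = 0$ and $\lambda^- = -\lambda$ for $\lambda < 0$, the unified expression $\lambda^+\meanr - \lambda^-\meanl$ equals $-(-\lambda)\meanl = \lambda \meanl$, which matches.

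There is no real obstacle here: the statement is essentially a bookkeeping unpacking of the positive homogeneity axiom together with the definition of $\meanl$. The only subtlety worth flagging is that the sub-additivity and positive homogeneity together would give $\expt[\lambda X] + \expt[-\lambda X] \geq 0$ with equality not required, so one cannot simply write $\expt[-X] = -\expt[X]$; this is exactly why the mean-uncertainty assumption $\meanl < \meanr$ forces the cases $\lambda \geq 0$ and $\lambda < 0$ to give different answers, and why the compact form has to be written with $\meanl$ and $\meanr$ separately rather than as a single multiple of $\expt[X]$. I would conclude by noting that the formula also remains correct (and trivial) in the mean-certain boundary case $\meanl = \meanr$, which is consistent with but outside the stated hypothesis.
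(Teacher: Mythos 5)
Your proof is correct and is the standard unpacking of the positive homogeneity axiom that the paper leaves implicit (Proposition~\ref{mean-certain-1} is stated without proof there). Your closing remark that the formula also holds when $\meanl=\meanr$ is apt: the paper's own proof of Proposition~\ref{mean-certain-2} invokes Proposition~\ref{mean-certain-1} precisely in that mean-certain case, so the hypothesis $\meanl<\meanr$ is not actually used in the argument.
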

\begin{prop}
\label{mean-certain-2}
	If $X$ has the mean-certainty: $\meanl=\meanr\eqqcolon\mu$ namely $-\expt[-X]=\expt[X]=\mu$, for $\lambda \in \R$, we will have 
\[
\expt[\lambda X]=\lambda\expt[X](=\lambda\mu)
\]
and furthermore, 
\[
\expt[Y+\lambda X]=\expt[Y] + \lambda\expt[X]. 
\]
\end{prop}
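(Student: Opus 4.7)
The plan is to prove the two assertions in sequence, reducing each to a direct application of the axioms together with Proposition \ref{mean-certain-1}.

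For the scaling identity $\expt[\lambda X]=\lambda\mu$, I would simply invoke Proposition \ref{mean-certain-1}: since $\meanl=\meanr=\mu$, the formula there collapses to $\expt[\lambda X]=\lambda^+\mu-\lambda^-\mu=(\lambda^+-\lambda^-)\mu=\lambda\mu$. A self-contained derivation is equally short: positive homogeneity gives $\expt[\lambda X]=\lambda\mu$ for $\lambda\geq 0$, and for $\lambda<0$ one writes $\expt[\lambda X]=-\lambda\expt[-X]=-\lambda(-\mu)=\lambda\mu$, using that mean-certainty forces $\expt[-X]=-\meanl=-\mu$.

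For the additivity identity, I would use a two-sided squeeze driven by sub-additivity. The upper bound is immediate from sub-additivity combined with the first part:
\[
\expt[Y+\lambda X]\leq\expt[Y]+\expt[\lambda X]=\expt[Y]+\lambda\mu.
\]
For the matching lower bound, I would decompose $Y=(Y+\lambda X)+(-\lambda X)$ and apply sub-additivity again, obtaining
\[
\expt[Y]\leq\expt[Y+\lambda X]+\expt[-\lambda X]=\expt[Y+\lambda X]-\lambda\mu,
\]
where the last equality uses the first part with $-\lambda$ in place of $\lambda$. Rearranging yields $\expt[Y+\lambda X]\geq\expt[Y]+\lambda\mu$, and combining both bounds closes the proof.

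There is no genuine obstacle here beyond careful sign bookkeeping when $\lambda<0$. The conceptual point worth emphasizing is that mean-certainty of $X$ forces $\expt[\cdot]$ to behave linearly \emph{in the direction of $X$}: the sub-additivity axiom, normally only a one-sided inequality, can be applied twice (once to $Y+\lambda X$ and once to the reconstruction of $Y$) so that the two half-inequalities meet exactly when the mean-uncertainty interval $[\meanl,\meanr]$ degenerates to a single point.
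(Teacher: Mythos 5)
Your proof is correct and follows essentially the same route as the paper: the first identity is read off from Proposition \ref{mean-certain-1}, and the additivity is obtained by a two-sided sub-additivity squeeze, with the lower bound coming from the same decomposition that the paper expresses as $\expt[Y+\lambda X]=\expt[Y-\lambda(-X)]\geq\expt[Y]-\expt[\lambda(-X)]$. You simply unpack the reverse sub-additivity step more explicitly; nothing is different in substance.
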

\begin{proof}
	The first one is directly from the proposition \ref{mean-certain-1}. The second one comes from
\[
\expt[Y+\lambda X] \leq \expt[Y]+\expt[\lambda X]=\expt[Y]+\lambda\expt[X]
\]
and 
\[
\expt[Y+\lambda X]=\expt[Y-\lambda(-X)]\geq \expt[Y]-\expt[\lambda(-X)]=\expt[Y]+\lambda\expt[X].
\]
\end{proof}

\begin{prop}
	\label{holder}
	For $p,q>0$, $\frac 1p + \frac 1q =1$, we have
	\[
	\expt[|XY|]\leq (\expt[|X|^p])^{1/p}(\expt[|Y|^q])^{1/q}.
	\]
\end{prop}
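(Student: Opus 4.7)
The plan is to mimic the classical proof of Hölder's inequality almost verbatim, replacing the linear expectation with $\hat{E}$ and using only the four axioms (monotonicity, constant preservation, sub-additivity, positive homogeneity) together with the pointwise Young inequality. The three tools needed from the sublinear setting are: monotonicity, to pass a pointwise inequality between random variables through $\hat{E}$; sub-additivity, to split the expectation of a sum into a sum of expectations; and positive homogeneity, to pull out the (nonnegative) normalizing constants. Crucially, these normalizing constants are deterministic, so there is no mean-uncertainty subtlety of the kind addressed by Propositions~\ref{mean-certain-1}--\ref{mean-certain-2}.

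First I would dispose of the degenerate cases. If $\hat{E}[|X|^p] = 0$ or $\hat{E}[|Y|^q] = 0$, I would argue (using monotonicity and the pointwise bound $|XY| \le \varepsilon |X|^p + C_\varepsilon |Y|^q$ from Young's inequality, or by a direct normalization argument) that $\hat{E}[|XY|] = 0$, so the inequality is trivial; and if either expectation is $+\infty$, the inequality also holds trivially. So assume both $a \coloneqq (\hat{E}[|X|^p])^{1/p}$ and $b \coloneqq (\hat{E}[|Y|^q])^{1/q}$ lie in $(0,\infty)$.

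Next I would apply Young's inequality pointwise: for every $\omega \in \Omega$,
\[
\frac{|X(\omega)|}{a} \cdot \frac{|Y(\omega)|}{b} \;\le\; \frac{1}{p}\,\frac{|X(\omega)|^p}{a^p} + \frac{1}{q}\,\frac{|Y(\omega)|^q}{b^q}.
\]
Applying monotonicity and then sub-additivity and positive homogeneity of $\hat{E}$ to the right-hand side yields
\[
\hat{E}\!\left[\frac{|XY|}{ab}\right] \;\le\; \frac{1}{p}\cdot \frac{\hat{E}[|X|^p]}{a^p} + \frac{1}{q}\cdot \frac{\hat{E}[|Y|^q]}{b^q} \;=\; \frac{1}{p}+\frac{1}{q} = 1.
\]
Finally, positive homogeneity on the left gives $\hat{E}[|XY|] \le ab$, which is the claim.

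The only delicate point, and hence the main obstacle, is the degenerate case $\hat{E}[|X|^p]=0$: in the classical framework one invokes $|X|^p=0$ a.s., but the $G$-framework lacks a canonical probability measure, so I would instead argue directly from Young's inequality in the form $|XY| \le \varepsilon|X|^p/p + \varepsilon^{-q/p}|Y|^q/q$ for arbitrary $\varepsilon>0$, take $\hat{E}$ of both sides, and let $\varepsilon \downarrow 0$; assuming $\hat{E}[|Y|^q]<\infty$ the right-hand side tends to $0$, recovering $\hat{E}[|XY|]=0$. (If $\hat{E}[|Y|^q]=\infty$ the desired inequality is vacuous.) Everything else is routine manipulation identical to the classical proof.
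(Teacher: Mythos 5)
The paper states Proposition~\ref{holder} without proof, so there is no in-text argument to compare against; your proposal must stand on its own. The main line of your argument is sound and is the canonical one: Young's inequality is a numerical pointwise statement, and once $a\coloneqq(\expt[|X|^p])^{1/p}$ and $b\coloneqq(\expt[|Y|^q])^{1/q}$ lie in $(0,\infty)$, passing it through $\expt$ uses exactly monotonicity, sub-additivity, and positive homogeneity with nonnegative deterministic scalars, so no mean-uncertainty issues of the kind in Propositions~\ref{mean-certain-1}--\ref{mean-certain-2} arise. That part is correct.

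The flaw is in your treatment of the degenerate case $\expt[|X|^p]=0$: the limit goes in the wrong direction. Your parametrized Young's inequality gives, for every $\varepsilon>0$,
\[
\expt[|XY|]\;\le\;\frac{\varepsilon}{p}\,\expt[|X|^p]\;+\;\frac{\varepsilon^{-q/p}}{q}\,\expt[|Y|^q].
\]
When $\expt[|X|^p]=0$ the first term vanishes for every $\varepsilon$, leaving $\expt[|XY|]\le \frac{\varepsilon^{-q/p}}{q}\expt[|Y|^q]$. Since $q/p>0$, sending $\varepsilon\downarrow 0$ as you propose makes $\varepsilon^{-q/p}\to\infty$ and the bound becomes vacuous. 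You must instead send $\varepsilon\to\infty$, so that $\varepsilon^{-q/p}\to 0$ and (under $\expt[|Y|^q]<\infty$) the right side collapses to $0$, giving $\expt[|XY|]=0$. With that single correction the argument is complete.
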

\begin{prop}
	For $p\geq1$, we have
	\[
	(\expt[|X+Y|^p])^{1/p} \leq (\expt[|X|^p])^{1/p}+(\expt[|Y|^p])^{1/p}.
	\]
\end{prop}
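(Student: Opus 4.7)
The plan is to mirror the classical derivation of Minkowski's inequality from Hölder's inequality, relying on monotonicity and sub-additivity of $\expt$ in place of the linear-expectation properties.

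First I would dispose of the trivial case $p=1$: by the pointwise triangle inequality $|X+Y|\leq |X|+|Y|$ and monotonicity plus sub-additivity, $\expt[|X+Y|]\leq \expt[|X|+|Y|]\leq \expt[|X|]+\expt[|Y|]$. For $p>1$, let $q=p/(p-1)$ so that $1/p+1/q=1$, and start from the pointwise bound
\[
|X+Y|^p = |X+Y|\,|X+Y|^{p-1} \leq |X|\,|X+Y|^{p-1} + |Y|\,|X+Y|^{p-1}.
\]
Applying monotonicity and sub-additivity of $\expt$ gives
\[
\expt[|X+Y|^p] \leq \expt\bigl[|X|\,|X+Y|^{p-1}\bigr] + \expt\bigl[|Y|\,|X+Y|^{p-1}\bigr].
\]

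Next I would apply Proposition~\ref{holder} with exponents $p$ and $q$ to each summand, noting that $(p-1)q = p$:
\[
\expt\bigl[|X|\,|X+Y|^{p-1}\bigr] \leq (\expt[|X|^p])^{1/p}(\expt[|X+Y|^p])^{1/q},
\]
and similarly with $|Y|$ replacing $|X|$. Substituting these back yields
\[
\expt[|X+Y|^p] \leq \bigl((\expt[|X|^p])^{1/p}+(\expt[|Y|^p])^{1/p}\bigr)(\expt[|X+Y|^p])^{1/q}.
\]

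Finally, I would divide both sides by $(\expt[|X+Y|^p])^{1/q}$ and use $1-1/q = 1/p$ to obtain the desired inequality. The only subtle point, and the main obstacle worth flagging, is the degenerate case when $\expt[|X+Y|^p] = 0$ (in which case the conclusion holds trivially since the left-hand side is zero) and ensuring finiteness of all expectations involved, which follows from $X,Y\in C_{l.Lip}$ so that $|X|^p,|Y|^p,|X+Y|^p$ all lie in $\mathcal{H}$. Everything else is direct bookkeeping using the sub-additivity and positive homogeneity of $\expt$, exactly as in the linear case, because Hölder's inequality (Proposition~\ref{holder}) has already done the heavy lifting.
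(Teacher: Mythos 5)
The paper states this proposition without proof, so there is no author argument to compare against; your task is therefore judged purely on correctness. Your proof is the standard derivation of Minkowski's inequality from H\"older's inequality, adapted verbatim to the sublinear setting, and it works: the pointwise bound $|X+Y|^p \leq |X|\,|X+Y|^{p-1}+|Y|\,|X+Y|^{p-1}$, followed by monotonicity and sub-additivity of $\expt$, then Proposition~\ref{holder} with conjugate exponent $q=p/(p-1)$ (using $(p-1)q=p$), and finally dividing through by $(\expt[|X+Y|^p])^{1/q}$ with $1-1/q=1/p$, is exactly the right chain. You correctly flag and dispose of the degenerate case $\expt[|X+Y|^p]=0$ and the trivial case $p=1$.

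One small slip worth fixing: you write that finiteness follows ``from $X,Y\in C_{l.Lip}$.'' That is a category error --- $X,Y$ are random variables in $\mathcal{H}$, not functions in $C_{l.Lip}$. The intended justification is that $|x|^p$ and $|x+y|^p$ are in $C_{l.Lip}$ for $p\geq 1$, and the framework stipulates $\varphi(X)\in\mathcal{H}$ (with $\expt$ finite on $\mathcal{H}$) for any $\varphi\in C_{l.Lip}$ whenever $X\in\mathcal{H}$. The substance of your argument is unaffected.
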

	
We will use $\expt$ to redefine distributions and independence. 



\begin{defn}[Distributions]
	$\mathbb{F}_X$ is called the \emph{distribution} of $X$, which is a functional: 
	$\mathbb{F}_X[\varphi]\coloneqq \expt[\varphi(X)]:\varphi\in C_{l.Lip}(\R^d) \to \R$.
	$X$ and $Y$ are \emph{identically distributed}, denoted by $X\eqdistn Y$, iff for any $\varphi\in C_{l.Lip}$,
$\expt[\varphi(X)]=\expt[\varphi(Y)]$, namely, $\mathbb{F}_X[\varphi]=\mathbb{F}_Y[\varphi]$. 
A sequence $\{X_n\}_n^\infty$ \emph{converges in distribution} to $X$, denoted as $X_n \converge{\text{d}} X$, iff for any $\varphi\in C_{l.Lip}$,
$\lim_{n\to\infty}\expt[\varphi(X_n)]=\expt[\varphi(X)]$.
\end{defn}

\begin{defn}[Independence]
	$Y$ \emph{is independent from} $X$, denoted by $X\seqind Y$, iff for any $\varphi\in C_{l.Lip}$, 
	\[
	\expt[\varphi(X,Y)]=\expt[\expt[\varphi(x,Y)]_{x=X}].
	\]
	Intuitively, any realization of $X$ will have no effects on the distribution of $Y$. 
\end{defn}
\begin{defn}[i.i.d.]
	$\{X_i\}_{i=1}^\infty$ is \emph{i.i.d.} iff $X_{i+1}\eqdistn X_i$ and $(X_1,X_2,\dotsc,X_i)\seqind X_{i+1}$ for each $i\in\N_+$. 
\end{defn}
Let $\bar X$ be an independent copy of $X$, which means $\bar X \eqdistn X$ and $X\seqind \bar X$. 
\begin{defn}[Maximal Distribution]
	$X$ follows \emph{Maximal Distribution} iff, for any independent copy $\bar X$, 
	\[
	X+\bar{X}\eqdistn 2X.
	\]
	This is the sublinear version of a constant. A more specific definition is that $X$ follows the maximal distribution $M(\Gamma)$ if there exists a bounded, closed and convex set $\Gamma\subset\R^d$ such that for any $\varphi\in C_{l.Lip}(\R^d)$, 
	\[
	\mathbb{F}_X[\varphi]
	=\expt[\varphi(X)]=\max_{v\in\Gamma} \varphi(v).
	\]
	For $d=1$, we have $X\sim\Maximal\meanInt$ with \emph{mean-uncertainty}: $\meanl\coloneqq-\expt[-X]$ and $\meanr\coloneqq\expt[X]$. 
\end{defn}

\begin{defn}[$G$-normal Distribution]
	$X$ follows \emph{$G$-normal Distribution} iff, for any independent copy $\bar X$,
	\[
	X + \bar X \eqdistn \sqrt{2} X.
	\]	
	For $d=1$, we have $X\sim\GN(0,\varI)$ with \emph{variance-uncertainty}: $\sdl^2\coloneqq-\expt[-X^2]$ and $\sdr^2\coloneqq\expt[X^2]$.
\end{defn} 

Let $\myfield{S}_d$ denote the set of all real-valued $d\times d$ symmetric matrices. 

\begin{thm}[$G$-normal distribution Characterized by the $G$-heat Equation] $X$ follows the $d$-dimensional $G$-normal distribution, iff
	$u(t,x)\coloneqq \expt[\varphi(x+\sqrt{1-t}X)]$
	is the solution to \emph{the $G$-heat Equation} defined on $[0,\infty]\times\R^d$:
	\[
	u_t+G(D_x^2 u)=0,\,u|_{t=1}=\varphi
	\]
	where $G(\mymat{A})\coloneqq\frac 12\expt[\langle \mymat{A}X,X \rangle]:\myfield{S}_d \to \R$, which is a sublinear function characterizing the distribution of $X$. For $d=1$, we have $G(a)=\frac 12 (\sdr^2 a^+ -\sdl^2 a^-)$ and when $\sdl^2>0$, this is also called \emph{the Black-Scholes-Barenblatt equation} with volatility uncertainty. 
	
\end{thm}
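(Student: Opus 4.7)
My plan is to prove both directions by a semigroup-plus-Taylor argument anchored in the self-similarity of the $G$-normal, and to close the converse with the uniqueness of viscosity solutions of the $G$-heat equation.

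For the forward implication, starting from $X+\bar X\eqdistn\sqrt{2}\,X$ I would first establish the scaling rule
\[
\sqrt{a}\,X+\sqrt{b}\,\bar X\eqdistn\sqrt{a+b}\,X,\qquad a,b\geq 0,
\]
for any independent copy $\bar X$ (by dyadic iteration and continuity in $a,b$). Combined with the definition of independence, this yields the semigroup identity
\[
u(t,x)=\expt\bigl[u(t+\delta,\,x+\sqrt{\delta}\,X)\bigr],\qquad 0\leq t<t+\delta\leq 1.
\]
A second-order Taylor expansion of $u(t+\delta,\cdot)$ around $x$ inside the outer expectation gives
\[
u(t,x)-u(t+\delta,x)=\expt\bigl[\sqrt{\delta}\,\langle\nabla_x u,X\rangle+\tfrac{\delta}{2}\langle D_x^2 u\,X,X\rangle+R_\delta\bigr].
\]
Self-similarity forces each marginal $\expt[X_i]=-\expt[-X_i]=0$, so Proposition \ref{mean-certain-2} peels off the first-order term and kills it; the second-order term is exactly $\delta\,G(D_x^2 u(t+\delta,x))$ by definition of $G$. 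Dividing by $\delta$ and sending $\delta\downarrow 0$ gives $u_t+G(D_x^2 u)=0$, while $u|_{t=1}=\varphi$ is immediate.

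For the converse, assume $u$ as defined solves the $G$-heat equation. Let $Y\sim\GN(0,\varI)$ have the same $G$, and set $v(t,x)\coloneqq\expt[\varphi(x+\sqrt{1-t}\,Y)]$. By the forward direction, $v$ satisfies the same PDE with the same terminal data $\varphi$, so the comparison/uniqueness theorem for viscosity solutions of this equation in the $G$-framework forces $u\equiv v$. Evaluating at $(0,0)$ gives $\expt[\varphi(X)]=\expt[\varphi(Y)]$ for every $\varphi\in C_{l.Lip}$, hence $X\eqdistn Y$ is $G$-normal.

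The main obstacle is that $u$ inherits only the locally Lipschitz regularity of $\varphi$ and is not a priori $C^2$, so the Taylor step is illegitimate pointwise. The standard remedy is to run the argument in the viscosity sense: at each probe point $(t,x)$ test $u$ from above (or below) by a smooth $\psi$ touching $u$ there, apply the semigroup identity to $\psi$, and extract the sub/supersolution inequality, with the remainder $R_\delta$ absorbed by finite polynomial moments of $X$. On the converse side one must separately verify the continuity and polynomial-growth hypotheses of the comparison principle, which follow from $\varphi\in C_{l.Lip}$ combined with finite moments of the $G$-normal.
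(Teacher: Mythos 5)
The paper states this theorem in the Preliminaries without proof; it is a known result from Peng's framework (see \cite{peng2007g, peng2008multi}), so there is no in-paper argument to compare against. That said, your sketch reproduces the standard strategy faithfully and with the right emphases: the scaling identity $\sqrt{a}\,X+\sqrt{b}\,\bar X\eqdistn\sqrt{a+b}\,X$ obtained by dyadic iteration from $X+\bar X\eqdistn\sqrt 2\,X$, the induced semigroup identity $u(t,x)=\expt[u(t+\delta,x+\sqrt\delta X)]$ via the definition of independence, the Taylor step in which mean certainty (Proposition \ref{mean-certain-2}) peels off the first-order term and $G$ absorbs the second-order term, and finally the passage to the viscosity formulation because $u$ is only locally Lipschitz a priori. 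One small point worth flagging in the Taylor step: the cross term is $\langle\nabla_x u,X\rangle$, so you need mean certainty not of each coordinate $X_i$ separately but of the scalar $\langle b,X\rangle$ for an arbitrary fixed $b$; this does follow because $\langle b,X\rangle$ inherits the self-similar relation and hence is one-dimensional $G$-normal with mean certainty zero, but it is cleaner to say so explicitly than to invoke $\expt[X_i]=-\expt[-X_i]=0$ marginal by marginal. On the converse, your argument through $v(t,x)\coloneqq\expt[\varphi(x+\sqrt{1-t}\,Y)]$ and uniqueness of viscosity solutions is correct, but it silently imports two substantial external inputs: the existence of a $G$-normal $Y$ realizing the sublinear function $G$, and the comparison principle for this fully nonlinear parabolic PDE on $[0,1]\times\R^d$ within the polynomial-growth class. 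Both hold, but in a self-contained development these are exactly the nontrivial ingredients (in Peng's treatment the existence of the $G$-normal is essentially \emph{constructed} from the PDE solution, so one has to be careful about which result is taken as given to avoid circularity). You correctly note that the comparison hypotheses must be checked against $C_{l.Lip}$ data and finite moments; making that verification explicit would close the gap.
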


\begin{re}
	We can use the function $G(\mymat{A})\coloneqq\frac 12\expt[\langle \mymat{A}X,X \rangle]$ to characterize the definition of $G$-normal distribution. In fact, $G(\mymat{A})$ can be further expressed as 
	\[G(\mymat{A})=\frac 12 \sup_{\dvar\in\dvarset} \trace[\mymat{A}\dvar]\] where $\dvarset=\{\mymat{B}\mymat{B}^T:\mymat{B}\in \myfield{S}_d\}$ is a collection of non-negative definite symmetric matrices which can be treated as the uncertainty set of the covariance matrices. 
\end{re}

\begin{defn}[$G$-normal distribution with Characterization] Let $X$ be any $d$-dimensional $G$-normal distributed random vector. To be specific, we say $X\sim\GN(\myvec 0, \dvarset)$ with the set of covariance matrices $\dvarset$ if its distribution is characterized by the $G$-heat equation with 
\[
G(\mymat{A})\coloneqq\frac 12\expt[\langle \mymat{A}X,X \rangle]=\frac 12 \sup_{\dvar\in\dvarset} \trace[\mymat{A}\dvar].\]
In other words, $\myset V$ is the set corresponding to the $G$ function characterizing the distribution of $X$.  In order to show the \emph{covariance-uncertainty} of $\GN(\myvec 0,\dvarset)$, we can expand the details of $\dvarset$ as 
\[
\dvarset\coloneqq\{\mymat{V}=\left(\rho_{ij}\sigma_{i}\sigma_{j}\right)_{d\times d}:\sigma_{i}^{2}\in[\sdl_{i}^{2},\sdr_{i}^{2}],\rho_{ij}=\rho_{ji}=\begin{cases}
1 & i=j\\
\in[\underline{\rho}_{ij},\overline{\rho}_{ij}] & i\neq j
\end{cases},i,j=1,2,\dotsc,d \}.
\]

When $d=1$, we say $X\sim \GN(0,\varI)$ with the variance interval $\varI$ if its distribution is characterized by the $G$-heat equation with 
\[
G(a) \coloneqq \frac 12 \expt[aX^2] = \frac 12 (\sdr^2 a^+ -\sdl^2 a^-).
\]

\end{defn}

	

\begin{thm}[The Nonlinear Central Limit Theorem by Peng]
\label{$G$-CLT}
	Consider a sequence of i.i.d. $\{X_i\}_{i=1}^\infty$ with mean-certainty $\expt[X_1]=-\expt[-X_1]=\myvec 0$. Let $X$ be a $G$-normal distributed random variable characterized by the function $G(\mymat{A})\coloneqq \frac 12 \expt[\langle \mymat{A}X_1,X_1 \rangle]$. Then for any $\varphi \in C_{l.Lip}$, 
	\[
	\lim_{n\to\infty} \expt[\varphi(\frac{1}{\sqrt n}\sum_{i=1}^{n} X_i)] = \expt[\varphi(X)].
	\]
	For $d=1$, let $\sdl^2\coloneqq-\expt[-X_1^2]$ and $\sdr^2\coloneqq\expt[X_1^2]$. Then we have $\frac{1}{\sqrt n}\sum_{i=1}^{n} X_i$ converges in distribution to $X\sim \GN(0,\varI)$ . 
\end{thm}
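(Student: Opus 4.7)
The strategy is the classical PDE argument: set up $u(t,x) \coloneqq \expt[\varphi(x + \sqrt{1-t}\,X)]$ with $X \sim \GN(\myvec 0, \dvarset)$ where $G$ is constructed from $\{X_i\}$ as above; by the characterization theorem, $u$ is the viscosity solution of the $G$-heat equation $u_t + G(D_x^2 u) = 0$ with $u(1,\cdot) = \varphi$. Since $\expt[\varphi(X)] = u(0, \myvec 0)$, the task reduces to showing $\expt[\varphi(S_n)] \to u(0, \myvec 0)$, where $S_n \coloneqq \frac{1}{\sqrt n}\sum_{i=1}^n X_i$. The plan is to interpolate between these two quantities along the time grid $t_k \coloneqq k/n$ using a one-step forward operator built from a single increment $X_{k+1}/\sqrt n$ at a time.

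Set $S_k \coloneqq \frac{1}{\sqrt n}\sum_{i=1}^k X_i$ and define
\[
\psi_k(x) \coloneqq \expt[u(t_{k+1}, x + X_{k+1}/\sqrt n)].
\]
By independence of $X_{k+1}$ from $(X_1,\dotsc, X_k)$, one has $\expt[u(t_{k+1}, S_{k+1})] = \expt[\psi_k(S_k)]$. Taylor-expanding the integrand to second order in $X_{k+1}$, the linear term in $X_{k+1}$ vanishes after applying $\expt$ because $X_{k+1}$ has mean-certainty $\myvec 0$ (Proposition \ref{mean-certain-2}), while the quadratic term produces exactly $\tfrac{1}{n} G(D_x^2 u(t_{k+1}, x))$ by the very definition of $G$. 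Combining this with a first-order Taylor expansion in $t$ and invoking the PDE $u_t + G(D_x^2 u) = 0$ cancels the deterministic main parts, yielding a uniform bound $\sup_x |\psi_k(x) - u(t_k, x)| \leq \delta_k$ with $\delta_k = o(1/n)$. Monotonicity of $\expt$ applied iteratively along the chain $\expt[u(1,S_n)] = \expt[\psi_{n-1}(S_{n-1})] \leq \expt[u(t_{n-1}, S_{n-1})] + \delta_{n-1} = \expt[\psi_{n-2}(S_{n-2})] + \delta_{n-1} \leq \cdots$ accumulates the error as $n \cdot o(1/n) \to 0$, establishing the limit.

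The main obstacle is \emph{regularity}: the above Taylor expansion requires $u$ to be $C^{1,2}$ with a uniform modulus of continuity on $u_t$ and $D_x^2 u$, whereas in general $u$ is only a viscosity solution of a possibly degenerate fully nonlinear PDE (notably when $\sdl^2 = 0$). My plan to circumvent this is a two-step regularization: first replace $G$ by the non-degenerate $G_\varepsilon(\mymat A) \coloneqq G(\mymat A) + \tfrac{\varepsilon}{2}\trace(\mymat A)$, so that Krylov-type interior estimates deliver a classical $C^{1,2}$ solution $u^\varepsilon$; second, mollify $\varphi$ to $\varphi^\delta \in C_b^\infty$ with $\varphi^\delta \to \varphi$ locally uniformly and with growth controlled via Proposition \ref{holder}. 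The telescoping argument applies cleanly to $u^{\varepsilon,\delta}$; sending $n \to \infty$ first, then $\varepsilon \to 0$ and $\delta \to 0$, and invoking stability of viscosity solutions together with $C_{l.Lip}$-continuity of $\varphi \mapsto \expt[\varphi(\cdot)]$, yields the general conclusion.
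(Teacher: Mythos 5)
The paper does not actually prove this theorem; it is quoted as background (Peng's nonlinear CLT), so there is no in-paper proof to compare against. That said, your proposal follows the standard PDE-based strategy that Peng himself uses (telescoping along the grid $t_k = k/n$, one-step Taylor expansion, cancelation via the $G$-heat equation, accumulation of $o(1/n)$ errors), and most of the skeleton is sound. The reduction of $\expt[\psi_k(S_k)]$ vs.\ $\expt[u(t_k,S_k)]$ via a uniform $\sup_x$ bound, together with monotonicity, constant preserving, and Proposition \ref{mean-certain-2} to kill the linear term, is exactly right.

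The gap is in your regularization step. You propose to replace $G$ by $G_\varepsilon(\mymat A) = G(\mymat A) + \tfrac{\varepsilon}{2}\trace(\mymat A)$ and let $u^\varepsilon$ solve $u^\varepsilon_t + G_\varepsilon(D_x^2 u^\varepsilon)=0$, while still driving the chain with the \emph{original} increments $X_{k+1}/\sqrt n$. But then the quadratic term in your Taylor expansion produces $\tfrac{1}{n}G(D_x^2 u^\varepsilon)$ — with the original $G$ coming from $\expt[\langle \cdot\,X_{k+1},X_{k+1}\rangle]$ — whereas the time expansion and the PDE give $\tfrac{1}{n}G_\varepsilon(D_x^2 u^\varepsilon)$. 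These do \emph{not} cancel; you are left with a residual $\tfrac{\varepsilon}{2n}\trace(D_x^2 u^\varepsilon)$ at each step, which after telescoping contributes an error of order $\varepsilon\cdot\sup_{t}\|D_x^2 u^{\varepsilon,\delta}(t,\cdot)\|_\infty$. Sending $\varepsilon\to0$ then requires a bound on $\|D_x^2 u^{\varepsilon,\delta}\|_\infty$ that is uniform in $\varepsilon$; this is exactly the quantity you obtained \emph{from} the Krylov estimates, whose constants deteriorate as $\varepsilon\to0$ when $\underline\sigma=0$. You have silently assumed the residual vanishes, and it is not obvious that it does.

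The clean fix — and it is what Peng does — is to regularize the \emph{random variables} rather than the $G$ function: replace $X_i$ by $X_i^\varepsilon := X_i + \varepsilon\,\xi_i$ with $\{\xi_i\}$ classical i.i.d.\ $N(\myvec 0,\idtymat_d)$ independent of the chain. Then the perturbed increments automatically have the characterizing function $G_\varepsilon$, so the one-step Taylor expansion produces $\tfrac{1}{n}G_\varepsilon(D_x^2 u^\varepsilon)$ and cancelation with the PDE is exact — no mismatch term. The comparison $|\expt[\varphi(S_n)]-\expt[\varphi(S_n^\varepsilon)]|\le C\varepsilon$ uniformly in $n$ (and likewise at the limit) then follows from the Lipschitz bound on $\varphi^\delta$ together with the uniform moment estimates you already intend to use via Proposition \ref{holder}. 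With this change, your $\delta$-mollification and the order of limits $n\to\infty$, $\varepsilon\to 0$, $\delta\to 0$ go through as you describe, and you arrive at Peng's proof.
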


We will call Theorem \ref{$G$-CLT} the nonlinear CLT in short. We also have the convergence rate of nonlinear CLT from \cite{song2017normal}. 

\begin{thm}[The Convergence Rate of Nonlinear CLT] 
\label{$G$-CLT-Rate}
Under the setting of Theorem \ref{$G$-CLT} when $d=1$, for bounded and Lipschitz continuous $\varphi$ (i.e. for any $x,y \in \R$, $|\varphi(x)|\leq M$ and $|\varphi(x)-\varphi(y)|\leq C_\varphi|x-y|$), there exist $\alpha\in(0,1)$ depending on $\sdl$ and $\sdr$, and $C_{\alpha,G}>0$ depending on $\alpha$, $\sdl$ and $\sdr$ such that 
	\[
	\sup_{C_\varphi \leq 1}\left|\expt[\varphi(\frac{1}{\sqrt n}\sum_{i=1}^{n} X_i)]-\expt[\varphi(X)]\right|\leq C_{\alpha,G}\frac{\expt[|X_1|^{2+\alpha}]}{n^{\frac{\alpha}{2}}} = \rate.
	\]
\end{thm}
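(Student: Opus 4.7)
The plan is to combine a Lindeberg-type telescoping with interior regularity for the $G$-heat equation. Let $u(t,x)\coloneqq\expt[\varphi(x+\sqrt{1-t}X)]$; by the preceding characterization, $u$ solves $u_t+G(u_{xx})=0$ with $u(1,\cdot)=\varphi$, and $\expt[\varphi(X)]=u(0,0)$. Writing $S_k\coloneqq\sum_{i=1}^k X_i$ with $S_0=0$ and letting $\phi_k\coloneqq\expt[u(k/n,S_k/\sqrt n)]$, one has $\phi_0=u(0,0)$ and (by continuity of $u$ at $t=1$) $\phi_n=\expt[\varphi(S_n/\sqrt n)]$, so it suffices to bound $|\phi_n-\phi_0|\leq\sum_{k=0}^{n-1}|\phi_{k+1}-\phi_k|$ step by step.

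The first ingredient is PDE regularity. Since $\sdl^2>0$, the $G$-heat equation is uniformly parabolic, so Krylov's interior $C^{1+\alpha/2,2+\alpha}$ estimate for fully nonlinear uniformly parabolic equations yields an exponent $\alpha\in(0,1)$ depending only on $\sdl,\sdr$, together with H\"older bounds on $u_t$ and $u_{xx}$ on compact sub-intervals $[0,1-\delta]$ in time. For bounded Lipschitz $\varphi$ the solution $u$ is globally bounded and globally Lipschitz in $x$, but the higher derivatives may degenerate as $t\to 1$; the standard remedy is to replace $\varphi$ by a mollification $\varphi_\varepsilon$ and to balance $\varepsilon$ against the remainder sum, contributing only a lower-order error.

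The second ingredient is a one-step bound using the PDE. Fix $k<n$ and set $v(x)\coloneqq\expt[u((k+1)/n,(x+X_{k+1})/\sqrt n)]$; by $G$-independence, $\phi_{k+1}=\expt[v(S_k)]$. Taylor expanding $u$ at $(k/n,x/\sqrt n)$ gives
\[
u\!\left(\frac{k+1}{n},\frac{x+y}{\sqrt n}\right)-u\!\left(\frac{k}{n},\frac{x}{\sqrt n}\right)=\frac{1}{n}u_t+\frac{y}{\sqrt n}u_x+\frac{y^2}{2n}u_{xx}+R(x,y),
\]
with $|R(x,y)|\leq C_{\alpha,G}\,n^{-(1+\alpha/2)}(1+|y|^{2+\alpha})$. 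Evaluating $\expt$ in $y=X_{k+1}$, Proposition \ref{mean-certain-2} (mean-certainty of $X_{k+1}$) kills the linear term, and Proposition \ref{mean-certain-1} gives the key identity $\expt[aX_{k+1}^2]=2G(a)$, so the quadratic term reduces to $G(u_{xx})/n$. Since $u_t+G(u_{xx})=0$ the principal part cancels, yielding $|v(x)-u(k/n,x/\sqrt n)|\leq\expt[|R(x,X_{k+1})|]$. Applying the elementary inequality $|\expt[A]-\expt[B]|\leq\expt[|A-B|]$ outside, one obtains $|\phi_{k+1}-\phi_k|\leq C_{\alpha,G}\expt[|X_1|^{2+\alpha}]n^{-(1+\alpha/2)}$, and summing $n$ such terms yields the claimed $n^{-\alpha/2}$ rate.

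The main obstacle is twofold. First, because $\expt$ is not linear, matching the Taylor quadratic term to the PDE must go through $\expt[aX_1^2]=2G(a)$ combined with sub-additivity, with both upper and lower bracketing of $\expt[v-u]$; this replaces the clean conditional-expectation decomposition of the classical Lindeberg swap by a systematic use of Propositions \ref{mean-certain-1} and \ref{mean-certain-2}, and one must also ensure that sub-additivity is applied symmetrically so that the principal cancellation survives in both directions. Second, the Krylov interior estimate deteriorates at $t=1$ for merely Lipschitz $\varphi$, so one must handle the final steps separately (or mollify and optimize), and it is precisely this boundary effect combined with the Krylov H\"older exponent $\alpha<1$ that forces the rate to be $n^{-\alpha/2}$ rather than the classical $n^{-1/2}$.
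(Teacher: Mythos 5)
The paper does not actually prove Theorem \ref{$G$-CLT-Rate}; it is imported as a black box from \cite{song2017normal} and then invoked in Corollary \ref{converge-rate}, so there is no in-paper argument to compare against. Song's proof goes through a Stein-type characterization and Stein equation for the $G$-normal distribution, whereas your sketch is the Lindeberg telescoping route combined with Krylov's interior regularity for fully nonlinear uniformly parabolic equations — a recognizable alternative path. Your one-step cancellation is essentially correct and you flag the right subtlety: with $\phi_k=\expt[u(k/n,S_k/\sqrt n)]$, Proposition \ref{mean-certain-2} and the mean-certainty $\expt[X_{k+1}]=-\expt[-X_{k+1}]=0$ remove the first-order term exactly (not just up to error), the identity $\expt[aX_{k+1}^2]=2G(a)$ matches the quadratic term to the $G$ operator, and $u_t+G(u_{xx})=0$ cancels the $O(1/n)$ principal part once one brackets $\expt$ from above and below using sub-additivity, leaving $|\phi_{k+1}-\phi_k|\le\expt[|R(S_k,X_{k+1})|]$.

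The gap is precisely the point you dismiss in one clause. For merely bounded Lipschitz $\varphi$ the interior $C^{1+\alpha/2,2+\alpha}$ norm of $u$ on $[0,1-\delta]\times\R$ blows up as $\delta\to 0$, yet your telescoping sum runs $k$ up to $n-1$, so the per-step remainder bound $\expt[|R|]\lesssim n^{-(1+\alpha/2)}\expt[|X_1|^{2+\alpha}]$ does \emph{not} hold with a uniform constant near the end of the sum. Asserting that mollifying $\varphi$ and ``balancing $\varepsilon$'' contributes ``only a lower-order error'' is exactly the bookkeeping that determines whether the rate $n^{-\alpha/2}$ is actually recovered rather than something weaker: one must track how the Krylov constant degenerates in $(1-t)$, separate the last $O(1)$ layers (where only the Lipschitz bound on $\varphi$ is available, giving $O(n^{-1/2})$ per step), and then optimize. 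Until that is carried out the exponent $\alpha/2$ is not established. Two smaller points: (i) both your argument and the cited result implicitly require $\sdl>0$ (uniform parabolicity), even though the statement of the theorem does not say so; (ii) the $\sup_{C_\varphi\le 1}$ in the statement asks for uniformity over the test class, which your sketch produces implicitly since every constant in the argument depends on $\varphi$ only through $C_\varphi$ and $M$, but this should be said explicitly.
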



\section{Main Theoretical Results\label{sec:Main-results}}

The iterative algorithm is based on a new concept, the \emph{Semi-$G$-normal distribution} $\semiGN(0,\varI)$ with a smaller uncertainty set than the $G$-normal distribution. This new distribution has a nice integral representation and a general connection with the $G$-normal distribution, that is the CLT (central limit theorem) in the $G$-framework. For reader's convenience, we will first give the results in one dimension and they can be easily extended to multi-dimensional cases. 
\subsection{The Semi-$G$-normal Distribution in one dimension}


\begin{defn}[The Semi-$G$-normal Distribution]
$W$ follows the \emph{Semi-$G$-normal distribution}, denoted
as, $W\sim\hat{N}(0,\varI)$, if there exist $Y\sim N(0,[1,1])$ and
$Z\sim M[\sdl,\sdr]$, $\sdr\geq\sdl\geq0$ with independent relation $Z\seqind Y$, such that 
\[
W=Z\cdot Y
\]
where ``$\cdot$'' is the number multiplication (which can be omitted) and the direction of independence here cannot be reversed. 
\end{defn}

\begin{re}
	$Y\sim\GN(0,[1,1])$
can be regarded as the classical standard normal distribution $N(0,1)$ since the corresponding $G$-heat equation will be reduced to classical heat equation when $\underline\sigma$ and $\overline\sigma$ coincide. 
\end{re}
\begin{re}[the mean and variance of $W$] It is not hard to show
that it has a certain zero mean: 
\[
\expt[W] =\expt[ZY]
 =\expt[\expt[zY]_{z=Z}]
 =\expt[E[zY]_{z=Z}] = 0
\]
and $-\expt[-W]=0$. For the variance, we have 
\[
\expt[W^{2}] =\expt[Z^{2}Y^{2}]
 =\expt[\expt[z^{2}Y^{2}]_{z=Z}]
 =\expt[E[z^{2}Y^{2}]_{z=Z}]
 =\expt[(z^{2}\cdot1)_{z=Z}]=\max_{z\in\sdInt}z^{2}=\sdr^{2}
\]
and similarly, 
\[
-\expt[-W^{2}] =\expt[-Z^{2}Y^{2}]
 =-\expt[E[-z^{2}Y^{2}]_{z=Z}]
 =-\max_{z\in\sdInt}(-z^{2})=\min_{z\in\sdInt}z^{2}=\sdl^{2}.
\]

\end{re}

\begin{thm}[The Integral Representation of the Semi-$G$-normal Distribution] \label{integral-rep} Let $W\sim\semiGN(0,\varI)$ then for any $\varphi\in C_{l.Lip}(\R)$,
we have 
\[
\expt[\varphi(W)] = \max_{z\in[\sdl,\sdr]}E[\varphi(N(0,z^{2}))]=\max_{z\in[\sdl,\sdr]}\int_{-\infty}^{\infty}\frac{1}{\sqrt{2\pi}}e^{-y^{2}/2}\varphi(zy)\diff y.
\]
\end{thm}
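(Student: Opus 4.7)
The plan is to apply the tower-style formula for independence, once to strip off $Z$ from the outside, then use that $Y$ is a classical normal to turn the inner sublinear expectation into a Lebesgue integral, and finally invoke the definition of the maximal distribution to convert the outer sublinear expectation into a maximum over $[\underline{\sigma},\overline{\sigma}]$.

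More precisely, first I would regard $\varphi(Z\cdot Y)$ as $\psi(Z,Y)$ with $\psi(z,y)\coloneqq\varphi(zy)$, which lies in $C_{l.Lip}(\R^2)$ because $\varphi\in C_{l.Lip}(\R)$ (the polynomial growth of $\varphi$ together with the product $zy$ gives the required bound on $|\psi(z_1,y_1)-\psi(z_2,y_2)|$). Since $Z\seqind Y$, the independence definition yields
\[
\expt[\varphi(W)] = \expt[\psi(Z,Y)] = \expt\bigl[\,\expt[\psi(z,Y)]_{z=Z}\,\bigr].
\]
Next, because $Y\sim\GN(0,[1,1])$ reduces to the classical $N(0,1)$ (as noted in the remark right before the theorem), its sublinear expectation coincides with the usual Lebesgue expectation. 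Hence, for each fixed $z\in\R$,
\[
\expt[\varphi(zY)] = E[\varphi(zY)] = \int_{-\infty}^{\infty}\frac{1}{\sqrt{2\pi}}e^{-y^{2}/2}\varphi(zy)\diff y \eqqcolon h(z).
\]

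The key technical step is to verify that $h\in C_{l.Lip}(\R)$, so that the outer $\expt[h(Z)]$ is well defined in our framework and the maximal-distribution identity can be applied. Using the locally Lipschitz property of $\varphi$ with constants $C_\varphi$ and $k$, I would bound
\[
|h(z_1)-h(z_2)| \leq \int \frac{1}{\sqrt{2\pi}}e^{-y^2/2}\,C_\varphi\bigl(1+|z_1 y|^k+|z_2 y|^k\bigr)|z_1-z_2||y|\diff y,
\]
and since the Gaussian has finite moments of every order, the right-hand side is at most $\tilde C(1+|z_1|^k+|z_2|^k)|z_1-z_2|$ for some constant $\tilde C$, giving the required local Lipschitz bound.

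Finally, since $Z\sim M[\underline{\sigma},\overline{\sigma}]$ and $h\in C_{l.Lip}$, the definition of the maximal distribution gives
\[
\expt[h(Z)] = \max_{z\in[\underline{\sigma},\overline{\sigma}]} h(z) = \max_{z\in[\underline{\sigma},\overline{\sigma}]}\int_{-\infty}^{\infty}\frac{1}{\sqrt{2\pi}}e^{-y^{2}/2}\varphi(zy)\diff y,
\]
which is the desired identity. The main obstacle I anticipate is the bookkeeping for the $C_{l.Lip}$ check on $h$; everything else is a direct application of definitions. One should also note that the order of the independence $Z\seqind Y$ is essential here: reversing it would put the supremum over $z$ \emph{inside} the $y$-integral via a different mechanism and would not produce the same formula.
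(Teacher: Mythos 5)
Your proof is correct and follows essentially the same route as the paper: apply the independence $Z\seqind Y$ to peel off $Z$ and reduce the inner sublinear expectation to a classical Gaussian integral, verify the resulting function $h$ is in $C_{l.Lip}$, then invoke the maximal-distribution definition for $Z$. The only small difference is that you also flag (correctly) that $\psi(z,y)=\varphi(zy)$ must be checked to be in $C_{l.Lip}(\R^2)$ before the tower formula applies, a point the paper's proof leaves implicit; your remark about why the direction of $\seqind$ matters is also apt.
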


\begin{proof} This is quite straightforward because: 
\[
\expt[\varphi(W)] =\expt[\varphi(YZ)]
 =\expt[\expt[\varphi(Yz)]_{z=Z}]\eqqcolon\expt[G(Z)]
\]
where $G(z)\coloneqq\expt[\varphi(Yz)]=E[\varphi(N(0,z^{2}))]$ can
be proved to be in $C_{l.Lip}$ based on $\varphi\in C_{l.Lip}$
. Specifically, we have 
\begin{align*}
|G(x)-G(y)| & =|E[\varphi(x\cdot Z)-\varphi(y\cdot Z)]|\\
 & \leq E[C_\varphi(1+|xZ|^k+|yZ|^k)|Z|\cdot|x-y|]\\
 & =C_\varphi\cdot E[|Z|+|Z|^{k+1}|x|^k+|Z|^{k+1}|y|^k]|x-y|\\
 & \leq C_G(1+|x|^k+|y|^k)|x-y|
\end{align*}
where $C_G=C_\varphi\max\{E[|Z|],E[|Z|^{k+1}]\}$.
Therefore, 
\begin{align*}
\expt[\varphi(W)] & =\expt[G(Z)]=\max_{z\in\sdInt}G(z)\\
 & =\max_{z\in\sdInt}E[\varphi(N(0,z^{2}))]\\
 & =\max_{z\in\sdInt}\int_{-\infty}^{\infty}\frac{1}{\sqrt{2\pi}}e^{-y^{2}/2}\varphi(zy)\,dy.\qedhere
\end{align*}
\end{proof}

\begin{re}[Why is it called a ``semi'' one?]\label{why-semi}
	The comparison theorem of parabolic PDEs ($G$-heat vs Heat Equations) tells us that
\[
\expt[\varphi(\GN(0,\varI))]\geq\max_{v\in\sdInt}E[\varphi(N(0,v^{2}))]=\expt[\varphi(\semiGN(0,\varI))]
\]
whose inequality is mostly strict (like $\varphi(x)=x^3$) and becomes equal when $\varphi$ is convex or concave. From the representation theorem of $\expt$: 
$\expt[\varphi(X)]=\sup_{P\in\mathcal Q} \lExpt_P[\varphi(X)]$, we have the intuition that $\mathcal{Q}_\text{Semi-$G$-normal} \subset \mathcal{Q}_\text{$G$-normal}$ where $\mathcal{Q}_\text{Semi-$G$-normal}$ consists of measures corresponding to classical normal distributions with $\sigma\in\sdInt$.
\end{re}

\begin{re} Intuitively, the $G$-normal distribution is more ``uncertain'' than the semi-$G$-normal distribution. 
	Explicitly speaking, we already have the following representation for the $G$-normal distributed $X\sim\GN(0,\varI)$ (see \cite{denis2011function}): 
\[
\expt[\varphi(X)]=\sup_{\theta\in A^{\Theta}}E_{P}[\varphi(\int_{0}^{1}\theta_{s}dB_{s})]
\]
where $\Theta=\sdInt$, $A^{\Theta}\coloneqq\{\theta:\theta_{t}\in\Theta,\text{for }t\in[0,1]\}$,
the set of all processes valuing in $\sdInt$ in the time range $[0,1]$ and $B$ is the classical Brownian motion in $\lExptSpace$. Meanwhile, from the integral representation (Theorem \ref{integral-rep}),  
we can show that for $W\sim\semiGN(0,\varI)$, 
\[
\expt[\varphi(W)]=\sup_{\theta\in A^{\Theta}}E_{P}[\varphi(\int_{0}^{1}\bar{\theta}dB_{s})]
\]
where $\bar{\theta}=\int_{0}^{1}\theta_{s}ds (\in \sdInt)$, the average of the
process $\theta$ over the time interval $[0,1]$. To summarize, the $G$-normal distribution has the uncertainty set consisting of \emph{all processes} valuing in $\Theta$, while the semi-$G$-normal distribution only has the set made up of \emph{all constant processes} valuing in $\Theta$. 
\end{re}
\begin{cor}[the connection with the $G$-normal distribution] \label{conn-G}
When $\varphi$ is convex or concave and $\varphi\in C^2(\R)$, for $X\sim N(0,\varI)$ and
$W\sim\hat{N}(0,\varI)$, we have 
\[
\expt[\varphi(X)]=\expt[\varphi(W)].
\]
\end{cor}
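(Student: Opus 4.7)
The plan is to show that when $\varphi$ is convex (resp. concave), both $\expt[\varphi(X)]$ and $\expt[\varphi(W)]$ collapse to the \emph{single} classical Gaussian expectation $E[\varphi(N(0,\sdr^2))]$ (resp. $E[\varphi(N(0,\sdl^2))]$), so equality is immediate. I will prove the convex case; the concave case follows by replacing $\varphi$ with $-\varphi$ (using positive homogeneity of $\expt$) or by a symmetric argument.

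For the semi-$G$-normal side, the work is essentially already done by Theorem \ref{integral-rep}: I only need to evaluate the maximum $\max_{z\in[\sdl,\sdr]} g(z)$ where $g(z)\coloneqq E[\varphi(N(0,z^2))]$. The classical fact is that for convex $\varphi$, $g$ is non-decreasing on $[0,\infty)$: one way is to note that $N(0,z_2^2) \stackrel{d}{=} N(0,z_1^2) + N(0,z_2^2-z_1^2)$ for $0\le z_1\le z_2$ with independent summands, and then apply Jensen's inequality conditionally on the first summand. Hence $\max_{z\in[\sdl,\sdr]}g(z)=g(\sdr)=E[\varphi(N(0,\sdr^2))]$, giving $\expt[\varphi(W)]=E[\varphi(N(0,\sdr^2))]$.

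For the $G$-normal side, I intend to use the PDE characterization $u(t,x)=\expt[\varphi(x+\sqrt{1-t}X)]$ solving $u_t+G(D_x^2u)=0$ with $u|_{t=1}=\varphi$. The key observation is that convexity is propagated by the $G$-heat semigroup: if $\varphi$ is convex in $x$, then $u(t,\cdot)$ remains convex for all $t\in[0,1]$, so $D_x^2u(t,x)\ge 0$ and therefore $G(D_x^2u)=\tfrac{1}{2}\sdr^2 D_x^2u$. The nonlinear $G$-heat equation then degenerates to the \emph{linear} heat equation with fixed volatility $\sdr$, whose solution at $(0,0)$ is precisely $E[\varphi(\sdr\,B_1)]=E[\varphi(N(0,\sdr^2))]$. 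As a cross-check (and a more probabilistic route), the representation $\expt[\varphi(X)]=\sup_{\theta\in A^\Theta}E_P[\varphi(\int_0^1\theta_s\,dB_s)]$ quoted in the remark above combined with Jensen's inequality applied to the convex functional $\theta\mapsto E[\varphi(\int_0^1\theta_s\,dB_s)]$ against the deterministic constant process $\theta\equiv\sdr$ pins the supremum at the same value.

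The main obstacle is justifying the convexity-propagation step cleanly at the PDE level. I expect to either invoke a standard viscosity-solution comparison argument (comparing $u$ against its quadratic secant in $x$) or to bypass the PDE entirely by running the probabilistic Jensen argument: conditional on $\theta$, $\int_0^1\theta_s\,dB_s$ is a centered Gaussian with variance $\int_0^1\theta_s^2\,ds\le\sdr^2$, so $E[\varphi(\int_0^1\theta_s\,dB_s)\mid\theta]\le E[\varphi(N(0,\sdr^2))]$ by the same monotonicity of $g$ used above, while the bound is attained by $\theta\equiv\sdr$. The $C^2$ assumption is needed only to make Jensen's inequality and the monotonicity of $g$ apply in their sharp form.
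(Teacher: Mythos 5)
Your argument is correct overall, and on the semi-$G$-normal side it takes a genuinely cleaner route than the paper. For $\expt[\varphi(W)]$ the paper also starts from Theorem \ref{integral-rep} and the reduction to maximizing $g(z)=E[\varphi(N(0,z^2))]$, but it establishes monotonicity of $g$ by Taylor-expanding $\varphi$ at $0$, isolating the second-order remainder $K(z)=E[\varphi''(\xi_{zY})(zY)^2]$, and differentiating under the integral sign with respect to $z$ to show $K'(z)\geq0$. Your convolution decomposition $N(0,z_2^2)\eqdistn N(0,z_1^2)+N(0,z_2^2-z_1^2)$ with independent summands, followed by conditional Jensen, reaches the same monotonicity more directly, avoids the delicate differentiation of a $z$-dependent Gaussian density, and in fact does not even need $\varphi\in C^2$ (that hypothesis really only serves the paper's remainder computation). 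For the $G$-normal side you and the paper lean on the same known fact: convexity of $\varphi$ propagates to convexity of $u(t,\cdot)$, so $u_{xx}\geq0$ and the $G$-heat equation degenerates to the linear heat equation at fixed volatility $\sdr$; the paper simply cites this as the well-known integral representation of the $G$-normal distribution under convexity, while you correctly flag that it needs a viscosity-comparison or probabilistic justification.

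Two small cautions. The aside that the concave case follows ``by replacing $\varphi$ with $-\varphi$ (using positive homogeneity of $\expt$)'' is not right as stated: positive homogeneity covers only $\lambda\geq0$, so $\expt[-\varphi(X)]\neq-\expt[\varphi(X)]$ in general under a sublinear expectation, and the reduction to $-\varphi$ alone does not transport the identity back to $\varphi$. The correct reduction is the symmetric argument you also offer (for concave $\varphi$, $g$ is non-increasing and $u(t,\cdot)$ stays concave, so both sides collapse to $E[\varphi(N(0,\sdl^2))]$), which is exactly how the paper handles it. Separately, in the probabilistic cross-check, conditioning on an adapted $\theta$ does not make $\int_0^1\theta_s\,dB_s$ a centered Gaussian with variance $\int_0^1\theta_s^2\,ds$, because an adapted $\theta$ is generally $B$-dependent; a rigorous version of that route uses the Dambis--Dubins--Schwarz time change $\int_0^1\theta_s\,dB_s=\tilde B_\tau$ with $\tau=\int_0^1\theta_s^2\,ds\leq\sdr^2$ together with the fact that $\varphi(\tilde B_t)$ is a submartingale for convex $\varphi$, then optional stopping. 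Since you offered this only as a cross-check, it does not affect the validity of your main argument.
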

\begin{proof} The well-known
integral representation of $G$-normal distribution under convexity (or
concavity) directly comes from the solution of the classical heat
equation because $u(t,x)$ will be convex (or concave, respectively)
to $x$ then $u_{xx}\geq0$ (or $\leq0$, respectively), giving us
\[
\expt[\varphi(X)]=\begin{cases}
E[\varphi(N(0,\sdr^{2}))] & \varphi\text{ is convex}\\
E[\varphi(N(0,\sdl^{2}))] & \varphi\text{ is concave}
\end{cases}.
\]
For the semi-$G$-normal distribution, by using its representation with $G(z)\coloneqq \lExpt[\varphi(zY)](z\in\sdInt)$ and $Y\sim N(0,1)$, we only need to prove that 
\[
\expt[\varphi(W)]=\max_{z\in\sdInt}G(z)= \begin{cases}
G(\sdr) & \varphi\text{ is convex}\\
G(\sdl) & \varphi\text{ is concave}
\end{cases}.
\]
First of all, $\varphi$ has the Taylor expansion 
\[
\varphi(x)=\varphi(0)+\varphi^{(1)}(0)x+\varphi^{(2)}(\xi_{x})\frac{x^{2}}{2}
\]
where $\xi_x \in (0,x)$. 

1. When $\varphi$ is convex, $\varphi^{(2)}(\xi_x)\geq 0$.
The Taylor expansion tells us that:
\begin{align*}
G(z) & = E[\varphi(zY)]\\
 & =E[\varphi(0)+\varphi^{(1)}(0)zY+\varphi^{(2)}(\xi_{zY})\frac{z^{2}}{2}Y^{2}]\\
 & =\varphi(0)+\frac{1}{2}E[\varphi^{(2)}(\xi_{zY})(zY)^{2}]
\end{align*}
where $\xi_{zY}\in(0,zY)$ is a random variable depending on $Y$. Let $M\coloneqq zY\sim N(0,z^{2})$,
then 
\[
K(z)\coloneqq \lExpt[\varphi^{(2)}(\xi_{zY})(zY)^{2}]=\lExpt[\varphi^{(2)}(\xi_{M})M^{2}]=\int\frac{1}{\sqrt{2\pi}}\exp(-\frac{m^{2}}{2z^{2}})\varphi^{(2)}(\xi_{m})m^{2}\diff m.
\]
In order to consider
the monotone property of $K(z)$, work on its derivative: 
\begin{align*}
K^{\prime}(z) & =\frac{d}{dz}\int\frac{1}{\sqrt{2\pi}}\exp(-\frac{m^{2}}{2z^{2}})\varphi^{(2)}(\xi_{m})m^{2}\diff m\\
 & =\int\frac{1}{\sqrt{2\pi}}\left[\frac{d}{dz}\exp(-\frac{m^{2}}{2z^{2}})\right]\varphi^{(2)}(\xi_{m})m^{2}\diff m\\
 & =\int\frac{1}{\sqrt{2\pi}}\underbrace{\left[\frac{m^{2}}{z^{3}}\exp\left(-\frac{m^{2}}{2z^{2}}\right)\right]}_{\geq0\text{ for }z\in\sdInt}\underbrace{\vphantom{\left[\frac{1}{2z}\exp\left(-\frac{m^{2}}{2z^{2}}\right)\right]}\varphi^{(2)}(\xi_{m})\,m^{2}}_{\geq0}\diff m \geq 0.
\end{align*}
This tells us $K(z)$ is increasing with respect to $z\in \sdInt$, then
$K(z)$ reaches its maximum at $z=\sdr$. Hence,
\[
\expt[\varphi(W)] = \max_{z\in\sdInt}G(z) = \max_{z\in\sdInt}(\varphi(0)+\frac{K(z)}{2})=G(\sdr).
\]

2. When $\varphi$ is concave, then $-\varphi$ is convex. replace the
$\varphi$ above with $-\varphi$ and repeat the same procedure, we
have 
\begin{align*}
-G(z) & =-E[\varphi(zY)]\\
 & =E[(-\varphi)(zY)]\\
 & =(-\varphi)(0)+\frac{z^{2}}{2}\underbrace{E[(-\varphi)^{(2)}(\xi_{zY})Y^{2}]}_{K(z)\geq0}
\end{align*}
and 
\[
K^{\prime}(z)=\int\frac{1}{\sqrt{2\pi}}\underbrace{\left[\frac{m^{2}}{z^{3}}\exp\left(-\frac{m^{2}}{2z^{2}}\right)\right]}_{\geq0\text{ for }z\in \sdInt}\underbrace{\vphantom{\left[\frac{m^{2}}{z^{3}}\exp\left(-\frac{m^{2}}{2z^{2}}\right)\right]}(-\varphi)^{(2)}(\xi_{m})\,m^{2}}_{\geq0}\diff m \geq 0
\]
Hence, $-G(z)$ is increasing with respect to $z$, that is, $G(z)$
is decreasing according to $z$. Therefore, 
\[
\expt[\varphi(X)]=\max_{z\in\sdInt}G(z)=G(\sdl).\qedhere
\]
\end{proof}
The initial motivation of the \emph{semi-$G$-normal} distribution is that we want to create a tool or ladder to help us better understand and handle the $G$-normal distribution, especially based on what we already know about the classical normal distribution, which turns out to be feasible from the nice properties of semi-$G$-normal distribution and thanks to the constructed theory in $G$-framework (like the nonlinear CLT by Peng). The following result is one of the exciting results from the semi-$G$-normal distribution to better understand and compute the expectation of the $G$-normal distribution. 

\subsection{The Iterative Approximation of the $G$-normal Distribution in one dimension}

\begin{lem}[General connection between the Semi-$G$-normal and the
$G$-normal distribution] \label{nl-CLT-semiG} 
Consider a sequence of i.i.d.\ $\{W_{i}\}_{i=1}\sim\hat{N}(0,\varI)$
and $X\sim N(0,\varI)$, then we have 
\[
\lim_{n\to\infty}\expt[\varphi(\frac{1}{\sqrt{n}}\sum_{i=1}^{n}W_{i})]=\expt[\varphi(X)],\;\forall\varphi\in C_{l.Lip}(\R).
\]
In other words, $\frac{1}{\sqrt{n}}\sum_{i=1}^{n}W_{i}$
converges in distribution to the $G$-normal distributed $X$. 

\end{lem}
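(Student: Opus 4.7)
The plan is to apply Peng's nonlinear CLT (Theorem \ref{$G$-CLT}) directly to the i.i.d.\ sequence $\{W_i\}$. Two hypotheses must be verified: (a) $W_1$ has mean-certainty at zero, and (b) the associated $G$-function $G_{W_1}(a) := \tfrac12 \expt[aW_1^2]$ agrees with the $G$-function $G(a) = \tfrac12(\sdr^2 a^+ - \sdl^2 a^-)$ characterizing the target $\GN(0,\varI)$.

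For (a), the remark on the mean and variance of $W$ immediately following the definition of $\semiGN(0,\varI)$ already computes $\expt[W_1] = -\expt[-W_1] = 0$, along with $\expt[W_1^2] = \sdr^2$ and $-\expt[-W_1^2] = \sdl^2$. For (b), one can either apply the integral representation (Theorem \ref{integral-rep}) to the test function $\varphi(w) = aw^2$ and directly compute $\max_{z\in\sdInt} a z^2$, which splits naturally according to the sign of $a$, or invoke Proposition \ref{mean-certain-1} with the random variable $W_1^2$ (whose mean-uncertainty interval is $[\sdl^2,\sdr^2]$) to obtain
\[
\expt[aW_1^2] = a^+\sdr^2 - a^-\sdl^2.
\]
Either route delivers $G_{W_1} = G$.

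With (a) and (b) in hand, Theorem \ref{$G$-CLT} yields the claimed convergence for every $\varphi\in C_{l.Lip}(\R)$, identifying the limit as $\GN(0,\varI)$. There is no significant obstacle here; the lemma essentially reflects the design principle behind the semi-$G$-normal, namely that $\semiGN(0,\varI)$ matches the first two \emph{moment intervals} of $\GN(0,\varI)$ while possessing a strictly smaller uncertainty set (cf.\ Remark \ref{why-semi}). The nonlinear CLT then averages out the constant-scale uncertainty of the $W_i$'s across $n$ independent copies and recovers the full process-level $G$-normal uncertainty in the limit. The only mild subtlety worth flagging is that one works implicitly in a sublinear expectation space rich enough to support the whole sequence $W_i = Z_i Y_i$ with the appropriate chain of independence relations between the $Z_i$'s and $Y_i$'s, but this is a standard consequence of the product construction in Peng's framework.
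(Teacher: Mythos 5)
Your proof is correct and takes the same approach as the paper, which disposes of this lemma in a single line by invoking Peng's nonlinear CLT (Theorem \ref{$G$-CLT}); you have simply spelled out the hypothesis verification (mean-certainty at zero, and matching variance interval $[\sdl^2,\sdr^2]$, both already computed in the remark following the definition of $\semiGN$) that the paper leaves implicit.
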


Lemma \ref{nl-CLT-semiG} is a direct result of the CLT (Theorem \ref{$G$-CLT}) in the $G$-framework. 

\begin{thm}[The Iterative Approximation of the $G$-normal Distribution] \label{iterate-G} Consider a $G$-normal distributed random variable
$X\sim N(0,\varI)$. For any $\varphi\in C_{l.Lip}(\R)$ and integer $n \geq 1$, consider the series of iteration functions 
$\{\varphi_{i,n}\}_{i=1}^n$ with initial function $\varphi_{0,n}(x)\coloneqq\varphi(x)$ and iterative relation: 
\[
\varphi_{i+1,n}(x)\coloneqq \max_{v\in[\sdl,\sdr]}E[\varphi_{i,n}(N(x,v^2/n))],i=0,1,\dotsc,n-1.
\]
The final iteration function for a given $n$ is $\varphi_{n,n}$. As $n\to\infty$, we have $\varphi_{n,n}(0)\to\expt[\varphi(X)]$. 

\end{thm}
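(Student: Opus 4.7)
The plan is to recognize the iterative recursion as the step-by-step evaluation of $\expt[\varphi(\frac{1}{\sqrt n}\sum_{i=1}^n W_i)]$ for an i.i.d.\ sequence $W_i\sim\semiGN(0,\varI)$, after which the conclusion follows immediately from Lemma \ref{nl-CLT-semiG}.

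First I would fix such an i.i.d.\ sequence $\{W_i\}_{i=1}^\infty$ on the ambient sublinear expectation space and introduce the partial sums $T_k \coloneqq \frac{1}{\sqrt n}\sum_{i=1}^k W_i$ for $0\leq k\leq n$, with the convention $T_0=0$. The target then becomes the deterministic identity $\varphi_{n,n}(0)=\expt[\varphi(T_n)]$, since combining it with Lemma \ref{nl-CLT-semiG} yields $\varphi_{n,n}(0)\to\expt[\varphi(X)]$.

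Next I would prove by downward induction on $k$ that
\[
\expt[\varphi(T_n)]=\expt[\varphi_{n-k,n}(T_k)]\quad\text{for }k=n,n-1,\dotsc,0.
\]
The base case $k=n$ is immediate from $\varphi_{0,n}=\varphi$. For the inductive step, assuming the identity at level $k$, the independence $(W_1,\dotsc,W_{k-1})\seqind W_k$ allows conditioning on $T_{k-1}$, giving
\[
\expt[\varphi_{n-k,n}(T_k)]=\expt\bigl[\expt[\varphi_{n-k,n}(x+W_k/\sqrt n)]_{x=T_{k-1}}\bigr].
\]
Since $W_k/\sqrt n\sim\semiGN(0,[\sdl^2/n,\sdr^2/n])$ by positive homogeneity applied to the maximal factor of $W_k$, Theorem \ref{integral-rep} rewrites the inner sublinear expectation as $\max_{v\in\sdInt}E[\varphi_{n-k,n}(N(x,v^2/n))]$ (after the substitution $z=v/\sqrt n$), and this is exactly $\varphi_{n-k+1,n}(x)$ by the defining recursion. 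Hence $\expt[\varphi_{n-k,n}(T_k)]=\expt[\varphi_{n-k+1,n}(T_{k-1})]$, closing the induction. Specializing to $k=0$ delivers the desired identity $\varphi_{n,n}(0)=\expt[\varphi(T_n)]$.

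The main point requiring care is the regularity of the iterates: every step above (the tower-type conditioning, the application of Theorem \ref{integral-rep}, and the independence axiom) demands that the test functions belong to $C_{l.Lip}(\R)$. I would verify, mimicking the estimate carried out inside the proof of Theorem \ref{integral-rep}, that $C_{l.Lip}(\R)$ is preserved by the recursion: given local Lipschitz continuity with polynomial growth at stage $i$, the Gaussian convolution $x\mapsto E[\varphi_{i,n}(N(x,v^2/n))]$ retains the same class uniformly in $v\in\sdInt$, and the maximum over the compact interval $\sdInt$ does not spoil it. Once this stability is in hand, nothing further is needed and the theorem follows by combining the identity with Lemma \ref{nl-CLT-semiG}.
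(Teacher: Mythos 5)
Your proposal is correct and follows essentially the same route as the paper: it identifies $\varphi_{n,n}(0)$ with $\expt[\varphi(\frac{1}{\sqrt n}\sum_{i=1}^n W_i)]$ by peeling off one semi-$G$-normal summand at a time via the independence axiom and Theorem \ref{integral-rep}, verifies the $C_{l.Lip}$ stability of the iterates, and then invokes Lemma \ref{nl-CLT-semiG}. The only difference is presentational --- you package the unwinding as a downward induction on $k$, whereas the paper writes the same steps as an explicit chain of equalities --- so there is nothing substantive to flag.
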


\begin{proof} Set the initial function $\varphi_{0,n}(x)\coloneqq\varphi(x)$
and the iteration 
\[
\varphi_{i,n}(x)\coloneqq\max_{v\in[\sdl,\sdr]}E[\varphi_{i-1,n}(N(x,\frac{v^{2}}{n}))].
\]
In order to use the integral representation of the Semi-$G$-normal distribution
(Theorem \ref{integral-rep}) in the next stage, we want each iteration
function to be in the function space $C_{l.Lip}$. For convenience,
we omit the subscript $n$ for a while and let $\varphi_{i}\coloneqq\varphi_{i,n}$.
We also know that the optimal $v$ will be some value in $\sdInt$
depending on $x$, i.e. 
\[
\varphi_{i}(x)=E[\varphi_{i-1}(N(x,\frac{v_{x}^{2}}{n})],v_{x}\in\sdInt.
\]
 By induction, we only need to show that given $\varphi_{i-1}\in C_{l.Lip}$,
we also have $\varphi_{i}\in C_{l.Lip}$, for $i=1,2,\dotsc,n$. Suppose
for $\varphi_{i-1}$ we have a constant $C_{i-1}$ and an positive
integer $k$ such that 
\[
|\varphi_{i-1}(x)-\varphi_{i-1}(y)|\leq C_{i-1}(1+|x|^{k}+|y|^{k})|x-y|.
\]
Let $Z\sim N(0,1)$ and consider
\begin{align*}
|\varphi_{i}(x)-\varphi_{i}(y)| & =|E[\varphi_{i-1}(N(x,\frac{v_{x}^{2}}{n})\text{]}-E[\varphi_{i-1}(N(y,\frac{v_{y}^{2}}{n})\text{]}|\\
 & =|E[\varphi_{i-1}(x+\frac{v_{x}}{\sqrt{n}}Z)\text{]}-E[\varphi_{i-1}(y+\frac{v_{y}}{\sqrt{n}}Z)\text{]}|\\
 & \leq E|\varphi_{i-1}(x+\frac{v_{x}}{\sqrt{n}}Z)-\varphi_{i-1}(y+\frac{v_{y}}{\sqrt{n}}Z)|\\
 & \leq E[C_{i-1}(1+|x+\frac{v_{x}}{\sqrt{n}}Z|^{k}+|y+\frac{v_{y}}{\sqrt{n}}Z|^{k})|(x+Z)-(y+Z)|]\\
 & =C_{i-1}|x-y|(1+E|x+\frac{v_{x}}{\sqrt{n}}Z|^{k}+E|y+\frac{v_{y}}{\sqrt{n}}Z|^{k}).
\end{align*}
For given $\omega\in\Omega$, let $z\coloneqq Z(\omega)$, we have 
\begin{align*}
|x+\frac{v_{x}}{\sqrt{n}}z|^{k} & =|(x+\frac{v_{x}}{\sqrt{n}}z)^{k}|\leq\sum_{j=1}^{k}\binom{k}{j}|x|^{j}|\frac{v_{x}}{\sqrt{n}}z|^{k-j}\\
 & \leq\sum_{j=1}^{k}\binom{k}{j}\max\{|x|^{k},|\frac{v_{x}}{\sqrt{n}}z|^{k}\}\leq2^{k}(|x|^{k}+|\frac{v_{x}}{\sqrt{n}}|^{k}|z|^{k})\leq2^{k}(|x|^{k}+|\frac{\sdr}{\sqrt{n}}|^{k}|z|^{k})
\end{align*}
where the $2^k$ can be improved to $\max\{2^{k-1},1\}$ but they are both constants so $2^k$ is good enough for us. Then $|x+\frac{v_{x}}{\sqrt{n}}Z(\omega)|^{k}\leq2^{k}(|x|^{k}+|\frac{\sdr}{\sqrt{n}}|^{k}|Z(\omega)|^{k})$;
taking expectation on both sides, we have 
\[
E|x+\frac{v_{x}}{\sqrt{n}}Z|^{k}\leq2^{k}(|x|^{k}+|\frac{\sdr}{\sqrt{n}}|^{k}E|Z|^{k}).
\]
Therefore, 
\begin{align*}
|\varphi_{i}(x)-\varphi_{i}(y)| & \leq C_{i-1}|x-y|(1+E|x+\frac{v_{x}}{\sqrt{n}}Z|^{k}+E|y+\frac{v_{y}}{\sqrt{n}}Z|^{k})\\
 & \leq C_{i-1}|x-y|(1+2^{k+1}|\frac{\sdr}{\sqrt{n}}|^{k}E|Z|^{k}+2^{k}|x|^{k}+2^{k}|y|^{k})\\
 & \leq C_{i}(1+|x|^{k}+|y|^{k})|x-y|
\end{align*}
where $C_{i}\coloneqq C_{i-1}\cdot\max\{1+2^{k+1}|\frac{\sdr}{\sqrt{n}}|^{k}E|Z|^{k},2^{k}\}$. 

Considering a sequence of nonlinear i.i.d.\ $\{W_{i}\}_{i=1}\sim\hat{N}(0,\varI)$
and $W_{i,n}\coloneqq\frac{1}{\sqrt{n}}W_{i}$, since $\varphi_{i}\in C_{l.Lip}$,
we can apply the integral representation of the Semi-$G$-normal distribution
at each step. Then we have 
\begin{align*}
\expt[\varphi(\frac{1}{\sqrt{n}}\sum_{i=1}^{n}W_{i})] & =\expt[\varphi_{0,n}(\sum_{i=1}^{n}W_{i,n})]\\
 & =\expt\big[\expt[\varphi_{0,n}(\sum_{i=1}^{n-1}w_{i,n}+W_{n,n})]_{w_{i,n}=W_{i,n}\,i=1,2,\dotsc,n-1}\big]\\
 & =\expt\Big[\big[\max_{v_{n}\in[\sdl,\sdr]}E[\varphi_{0,n}(\sum_{i=1}^{n-1}w_{i,n}+N(0,\frac{v_{n}^{2}}{n}))]\big]_{w_{i,n}=W_{i,n},\,i=1,2,\dotsc,n-1}\Big]\\
 & =\expt\Big[\big[\max_{v_{n}\in[\sdl,\sdr]}E[\varphi_{0,n}(N(\sum_{i=1}^{n-1}w_{i,n},\frac{v_{n}^{2}}{n}))]\big]_{w_{i,n}=W_{i,n},\,i=1,2,\dotsc,n-1}\Big]\\
 & =\expt[\varphi_{1,n}(\sum_{i=1}^{n-1}W_{i,n})]\\
 & =\expt\Big[\big[\max_{v_{n-1}\in[\sdl,\sdr]}E[\varphi_{1,n}(N(\sum_{i=1}^{n-2}w_{i,n},\frac{v_{n-1}^{2}}{n}))]\big]_{w_{i,n}=W_{i,n}\,i=1,2,\dotsc,n-2}\Big]\\
 & =\cdots\\
 & =\expt\Big[\big[\max_{v_{2}\in[\sdl,\sdr]}E[\varphi_{n-2,n}(N(w_{1,n},\frac{v_{2}^{2}}{n}))]\big]_{w_{1,n}=W_{1,n}}\Big]\\
 & =\expt[\varphi_{n-1,n}(W_{1,n})]\\
 & =\max_{v_{1}\in[\sdl,\sdr]}E[\varphi_{n-1,n}(N(0,\frac{v_{1}^{2}}{n}))] = \varphi_{n,n}(0)
\end{align*}
According to Lemma \ref{nl-CLT-semiG}, we have 
\[
\expt[\varphi(X)]=\lim_{n\rightarrow\infty}\expt[\varphi(\frac{1}{\sqrt{n}}\sum_{i=1}^{n}W_{i})]=\lim_{n\rightarrow\infty}\varphi_{n,n}(0).\qedhere
\]

\end{proof}

\begin{re}
	From the proof, we note that the iteration function can also be expressed as the sublinear expectation of the semi-$G$-normal distribution (letting $W_0\coloneqq 0$): 
	\[
	\varphi_{i,n}(x)=\expt[\varphi(x+\sum_{j=0}^{i}\frac{W_{n-j}}{\sqrt{n}})]=\expt[\varphi(x+\sum_{j=0}^{i}\frac{W_j}{\sqrt{n}})]
	\]
	for $i=0,1,\dotsc,n$. 
\end{re}

Furthermore, the following result shows that $\{\varphi_{i,n}\}_{i=0}^n$ produces the whole solution surface of the corresponding $G$-heat equation on a given time grid. The proof involves the  convergence rate of the nonlinear CLT (Theorem \ref{$G$-CLT-Rate}) from \cite{song2017normal}, which considers the functions $\varphi$ being bound and Lipschitz. If this is not the only function space, we can adapt the above result to the space $\fspace$ (locally Lipschitz) to get the Corollary \ref{converge-rate}.

\begin{cor}
\label{converge-rate}
 Consider the $G$-heat equation defined on $[0,\infty]\times\R$:
	\[
	u_t+G(u_{xx})=0,\,u|_{t=1}=\varphi
	\]
where $G(a)\coloneqq \frac{1}{2}\expt[aX^{2}]=\frac{1}{2}(\sdr^{2}a^{+}-\sdl^{2}a^{-})$ and $\varphi\in C_{l.Lip}(\R)$.
Then for the iterations $\{\varphi_{i,n}\}_{i=0}^n$ in Theorem \ref{iterate-G}, for each $p\in(0,1]$, we have
\[
|u(1-p,x) - \varphi_{\lfloor np\rfloor,n}(x)|=|\expt[\varphi(x+\sqrt p X)]-\expt[\varphi(x+\sum_{i=0}^{\lfloor np\rfloor}\frac{W_i}{\sqrt{n}})]| = C_\varphi(1+|x|^k) O(\frac{1}{(np)^{\alpha/2}}).
\] 
where $\floor{np}$ is the floor (or integer) part of $np$.
When $p=0$, we have $u(1,x)=\varphi(x)=\varphi_{0,n}(x)$. 
\end{cor}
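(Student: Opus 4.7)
The strategy is to reinterpret both $u(1-p,x)$ and $\varphi_{\lfloor np\rfloor,n}(x)$ as sublinear expectations and then bound their difference via the nonlinear CLT convergence rate, adapted from bounded Lipschitz to $C_{l.Lip}$ test functions. The first equality in the statement is essentially bookkeeping: by the $G$-heat equation characterization theorem, $u(t,x) = \expt[\varphi(x+\sqrt{1-t}X)]$, so setting $t=1-p$ gives $u(1-p,x) = \expt[\varphi(x+\sqrt p X)]$; and by the remark following Theorem \ref{iterate-G} (with the convention $W_0 \coloneqq 0$), $\varphi_{\lfloor np\rfloor,n}(x) = \expt[\varphi(x + \sum_{i=0}^{\lfloor np\rfloor} W_i/\sqrt n)]$.

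Set $m \coloneqq \lfloor np\rfloor$ and factor the semi-$G$-normal sum as $\sqrt{m/n}\,\bar S_m$, where $\bar S_m \coloneqq (1/\sqrt m)\sum_{i=1}^m W_i$. The triangle inequality decomposes the gap into $A+B$ with
$$A \coloneqq |\expt[\varphi(x+\sqrt p X)] - \expt[\varphi(x+\sqrt{m/n}\,X)]|$$
and $B \coloneqq |\expt[\psi(X)] - \expt[\psi(\bar S_m)]|$ for $\psi(y) \coloneqq \varphi(x + \sqrt{m/n}\,y)$. The step-size correction $A$ is handled by the local Lipschitz property of $\varphi$ together with $|\sqrt p - \sqrt{m/n}| \leq 1/(n\sqrt p)$ and finite moments of $X$, yielding $A = C_\varphi(1+|x|^k)\,O(1/(n\sqrt p))$, which is of strictly lower order than $1/(np)^{\alpha/2}$ as $n \to \infty$. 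The CLT gap $B$ is handled by applying Theorem \ref{$G$-CLT-Rate} to the i.i.d.\ semi-$G$-normal sequence $\{W_i\}_{i=1}^m$ and the test function $\psi$; since $W_1 \sim \semiGN(0,\varI)$ has the same $\sdl^2,\sdr^2$ as the limit $X$ (as computed in the remark following the definition of the semi-$G$-normal), this produces a term of order $1/m^{\alpha/2} = O(1/(np)^{\alpha/2})$.

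The main obstacle is that Theorem \ref{$G$-CLT-Rate} is formulated for \emph{bounded} Lipschitz functions, while $\psi$ only inherits the polynomial growth of $\varphi$. I would overcome this by truncation: approximate $\psi$ by a bounded Lipschitz function $\psi^R$ agreeing with $\psi$ on $[-R,R]$ and capped outside, apply the theorem to $\psi^R$, and control the tail contributions $|\expt[\psi(X)-\psi^R(X)]|$ and $|\expt[\psi(\bar S_m)-\psi^R(\bar S_m)]|$ by Chebyshev-type estimates using uniform moment bounds $\expt[|X|^q],\,\expt[|\bar S_m|^q] \leq C_q$ for all $q$ (the second following from $\expt[|W_1|^q] < \infty$ via an inductive moment computation in the $G$-framework). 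Optimizing $R$ as a function of $m$ balances the truncation error against the CLT rate, and tracking how the Lipschitz modulus of $\psi^R$ depends on $x$ through the local Lipschitz constant of $\varphi$ produces the polynomial prefactor $C_\varphi(1+|x|^k)$. Summing the bounds on $A$ and $B$ gives the claimed $C_\varphi(1+|x|^k)\,O(1/(np)^{\alpha/2})$; the boundary case $p=0$ is immediate since $\varphi_{0,n}=\varphi=u(1,\cdot)$.
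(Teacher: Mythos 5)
Your proof is correct and takes a genuinely different, and in fact cleaner, route than the paper. Both decompositions pass through the intermediate quantity $\expt[\varphi(x+\sqrt{p_n}X)]$ with $p_n=\lfloor np\rfloor/n$, so your $A$ is identically the paper's ``time-grid correction'' term $(1)=|u(1-p,x)-u(1-p_n,x)|$, but you bound it very differently: the paper works on the PDE side, invoking H\"older continuity of $u_t$ from external parabolic regularity theory (citing Pao and Wang) to get $c_x O(1/n)$, whereas you stay entirely on the probabilistic side, using only the local Lipschitz modulus of $\varphi$, moments of the $G$-normal, and $|\sqrt{p}-\sqrt{p_n}|=O(1/(n\sqrt p))$, obtaining $C_\varphi(1+|x|^k)O(1/(n\sqrt p))$ — a slightly weaker rate but still dominated by $(np)^{-\alpha/2}$ and fully self-contained. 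For the CLT piece, the paper introduces two extra subterms $(2)_1,(2)_2$ solely to replace $\sqrt{p_n}$ by $\sqrt p$ before invoking Theorem~\ref{$G$-CLT-Rate} with test function $\tilde\varphi(a)=\varphi(x+\sqrt p\,a)$; your $B$ avoids this entirely by applying the rate theorem directly at scale $\sqrt{p_n}$ to $m=\lfloor np\rfloor$ summands, giving $O(1/m^{\alpha/2})=O(1/(np)^{\alpha/2})$ in one step. Finally, you explicitly carry out the extension of Theorem~\ref{$G$-CLT-Rate} from bounded Lipschitz to $C_{l.Lip}$ test functions via truncation and balancing, whereas the paper only remarks before the corollary that such an adaptation is possible and then applies the rate to an unbounded $\tilde\varphi$ without filling in the details; the only step you leave similarly unelaborated is the uniform-in-$m$ moment bound $\expt[|\bar S_m|^q]\le C_q$, which the paper also invokes implicitly (``let $n$ be large enough so that $\ldots$'') in its $(2)_1,(2)_2$ estimates.
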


\begin{re}
	If $\varphi$ is required to be bounded and Lipschitz, we may remove the local factor $(1+|x|^k)$ from the error part to get a uniform error approximation regardless of $x$. 
\end{re}

\begin{proof}
For each $p\in(0,1)$, consider the error when approximating $u(1-p,x)$, which can be approximated by 
\[
u(1-\frac{\lfloor np\rfloor}{n},x)=\expt[\varphi(x+\sum_{j=0}^{\lfloor np\rfloor}\frac{X_{j}}{\sqrt{n}})]\approx\expt[\varphi(x+\sum_{i=0}^{\lfloor np\rfloor}\frac{W_i}{\sqrt{n}})]=\varphi_{\lfloor np\rfloor,n}(x).
\]
 Specifically speaking, we intend to work on the error
\[
|u(1-p,x)-\varphi_{\floor{np},n}(x)|\leq\underbrace{|u(1-p,x)-u(1-\frac{\floor{np}}{n},x)|}_{(1)}+\underbrace{|u(1-\frac{\floor{np}}{n},x)-\varphi_{\floor{np},n}(x)|}_{(2)}.
\]
Before diving into these two parts, we can prepare the converging
property of $p_{n}\coloneqq\frac{\lfloor np\rfloor}{n}$. Actually, the inequality 
\[
p_{n}=\frac{\floor{np}}{n}\leq\frac{np}{n}\leq\frac{\floor{np}+1}{n}=p_{n}+\frac{1}{n},
\]
tells us that 
\[
|p_{n}-p|<\frac{1}{n}.
\]
The $(1)$ part involves the continuity of $u$ on the time dimension
specified by doing the Taylor expansion:
\begin{align*}
(1) & \leq|u_{t}(1-p,x)||p_{n}-p|+\underbrace{O(|p_{n}-p|^{2})}_{O(\frac{1}{n^{2}})}.
\end{align*}
We are looking for bound of $|u_{t}|$. Fortunately, according to
the properties of the solution to the $G$-heat equation (see \cite{pao2012nonlinear} and \cite{wang1992regularity}), there
exist constant $C>0$ and $\beta>0$, such that 
\[
|u_{t}(t,x)-u_{t}(s,x)|\leq C|t-s|^{\frac{\beta}{2}}.
\]
By letting $s=0$ and $c_{x}\coloneqq|u_{t}(0,x)|$, we have 
\[
|u_{t}(t,x)|\leq|u_{t}(t,x)-u_{t}(0,x)|+|u_{t}(0,x)|\leq C|t|^{\frac{\beta}{2}}+c_{x}.
\]
Therefore, for a fixed $x$, we have
\[
(1)\leq(C|1-p|^{\frac{\beta}{2}}+c_{x})|p_{n}-p|+O(\frac{1}{n^{2}})=c_xO(\frac{1}{n}).
\]
The $(2)$ part can be rewritten as follows:
\begin{align*}
(2) & =|\expt[\varphi(x+\frac{1}{\sqrt{n}}\sum_{j=0}^{\floor{np}}X_{j})]-\expt[\varphi(x+\frac{1}{\sqrt{n}}\sum_{j=0}^{\floor{np}}W_{j})]|\\
 & =|\expt[\varphi(x+\sqrt{\frac{\floor{np}}{n}}\frac{1}{\sqrt{\floor{np}}}\sum_{j=0}^{\lfloor np\rfloor}X_{j})]-\expt[\varphi(x+\sqrt{\frac{\floor{np}}{n}}\frac{1}{\sqrt{\floor{np}}}\sum_{j=0}^{\lfloor np\rfloor}W_{j})]|.
\end{align*}
By letting $\mathbf{X}_{np}\coloneqq\frac{1}{\sqrt{\floor{np}}}\sum_{j=0}^{\lfloor np\rfloor}X_{j}$
and $\mathbf{W}_{np}\coloneqq\frac{1}{\sqrt{\floor{np}}}\sum_{j=0}^{\lfloor np\rfloor}W_{j}$
, we have
\begin{align*}
(2) & =|\expt[\varphi(x+\sqrt{p_{n}}\mathbf{X}_{np})]-\expt[\varphi(x+\sqrt{p_{n}}\mathbf{W}_{np})]|\\
 & \leq|\expt[\varphi(x+\sqrt{p_{n}}\mathbf{X}_{np})]-\expt[\varphi(x+\sqrt{p}\mathbf{X}_{np})]|\\
 & +|\expt[\varphi(x+\sqrt{p_{n}}\mathbf{W}_{np})]-\expt[\varphi(x+\sqrt{p}\mathbf{W}_{np})]|\\
 & +|\expt[\varphi(x+\sqrt{p}\mathbf{X}_{np})]-\expt[\varphi(x+\sqrt{p}\mathbf{W}_{np})]|\\
 & \coloneqq(2)_{1}+(2)_{2}+(2)_{3}
\end{align*}
where $(2)_{1}+(2)_{2}$ involves the continuity of $\varphi$ and
the shrinking speed of $|p_{n}-p|$ and $(2)_{3}$ is exactly fitted
into the convergence rate of nonlinear CLT. In $(2)_{1}$ or $(2)_{2}$,
we do not need to worry about the random variables since $\mathbf{X}_{np}\eqdistn X\eqdistn\GN(0,\varI)$
and $\mathbf{W}_{np}\converge\dist X$ as $n\to\infty$ from nonlinear
CLT. Hence, let us work on a general expression by replacing \textbf{$\mathbf{X}_{np}$
}and $\mathbf{W}_{np}$ by $Z_{n}$ satisfying $Z_{n}\converge\dist X$.
We know $\varphi\in C_{l.Lip}$ satisfying, for $x,y\in\R$,
\[
|\varphi(x)-\varphi(y)|\leq C_{\varphi}(1+|x|^{k}+|y|^{k})|x-y|
\]
with $C_{\varphi}>0$ and $k\in\mathbb{N}$. When $k=0$, $\varphi$
is uniformly Lipschitz, then 
\begin{align*}
(2)_{1}\text{or}(2)_{2} & =|\expt[\varphi(x+\sqrt{p_{n}}Z_{n})]-\expt[\varphi(x+\sqrt{p}Z_{n})]|\\
 & \leq\expt[|\varphi(x+\sqrt{p_{n}}Z_{n})-\varphi(x+\sqrt{p}Z_{n})|]\\
 & \leq\expt[C_{\varphi}|\sqrt{p_{n}}-\sqrt{p}||Z_{n}|]=C_{\varphi}|\sqrt{p_{n}}-\sqrt{p}|\expt[|Z_{n}|]
\end{align*}
For the expectation part in the last line, since $Z_{n}\converge\dist X$,
we can make $n$ large enough so that
\[
\expt[|Z_{n}|]\leq\expt[|X|]+1\eqqcolon K_{0}.
\]
For the $p_{n}$ part, the Taylor expansion of $\sqrt{x}$ at $x=p$
tells us that 
\[
|\sqrt{p_{n}}-\sqrt{p}|\leq\frac{1}{2\sqrt{p}}|p_{n}-p|+O(|p_{n}-p|^{2})=O(\frac{1}{n\sqrt{p}}).
\]
Hence, when $k=0$,
\[
(2)_{1}\text{or}(2)_{2}\leq C_{1}O(\frac{1}{n\sqrt{p}})
\]
with $C_{1}=C_{\varphi}K_{0}$. When $k\geq1$, we have 
\begin{align*}
(2)_{1}\text{or}(2)_{2} & \leq\expt[|\varphi(x+\sqrt{p_{n}}Z_{n})-\varphi(x+\sqrt{p}Z_{n})|]\\
 & \leq\expt[C_{\varphi}(1+|x+\sqrt{p_{n}}Z_{n}|^{k}+|x+\sqrt{p}Z_{n}|^{k})|\sqrt{p_{n}}-\sqrt{p}||Z_{n}|]\\
 & \leq C_{\varphi}\expt\Big[\big(1+2^{k+1}|x|^{k}+2^{k}|Z_{n}|^{k}(\underbrace{|\sqrt{p_{n}}|^{k}+|\sqrt{p}|^{k}}_{\leq2})\big)|Z_{n}|\Big]|\sqrt{p_{n}}-\sqrt{p}|
\end{align*}
For the expectation part in the last line, by letting $n$ be large
enough so that 
\[
\big(\expt[|Z_{n}|^{2}]\big)^{\frac{1}{2}}\leq\big(\expt[|X|^{2}]\big)^{\frac{1}{2}}+1=\big(E[|\CN(0,\sdr^{2})|^{2}\big)^{\frac{1}{2}}+1=\sdr+1\eqqcolon K_1,
\]
and
\[
\big(\expt[|Z_{n}|^{2k}]\big)^{\frac{1}{2}}\leq\big(\expt[|X|^{2k}]\big)^{\frac{1}{2}}+1=\big(E[|\CN(0,\sdr^{2})|^{2k}\big)^{\frac{1}{2}}+1=\sdr^{k}((2k-1)!!)^{\frac{1}{2}}+1\eqqcolon K_2,
\]
we have
\begin{align*}
\expt\Big[\big(1+2^{k+1}|x|^{k}+2^{k+1}|Z_{n}|^{k}\big)|Z_{n}|\Big] & \leq\big(\expt[(1+2^{k+1}|x|^{k}+2^{k+1}|Z_{n}|^{k})^{2}]\big)^{\frac{1}{2}}\big(\expt[|Z_{n}|^{2}]\big)^{\frac{1}{2}}\\
 & \leq\Big(1+2^{k+1}|x|^{k}+2^{k+1}\big(\expt[|Z_{n}|^{2k}]\big)^{\frac{1}{2}}\Big)\big(\expt[|Z_{n}|^{2}]\big)^{\frac{1}{2}}\\
 & \leq C_2(1+|x|^k)
\end{align*}
 where $C_{2}=K_1\cdot\max\{1+2^{k+1}K_2,2^{k+1}\}$. Again, for the
$p_{n}$ part, we have $|p_n-p|=O(\frac{1}{n\sqrt{p}})$.
Therefore, when $k\geq 1$, for a fixed $x$, 
\[
(2)_{1}\text{or}(2)_{2}\leq C_{3}(1+|x|^{k})O(\frac{1}{n\sqrt{p}})
\]
where $C_{3}=C_{\varphi}C_{2}$. In a word, for $k\in\mathbb{N}$, 
\[
(2)_{1}\text{or}(2)_{2}\leq C_{4}(1+|x|^{k})O(\frac{1}{n\sqrt{p}})
\]
where $C_4=\max\{C_1,C_3\}$

In $(2)_3$, we can directly apply the convergence rate of nonlinear CLT (Theorem \ref{$G$-CLT-Rate}):
\begin{align*}
	(2)_3 & =|\expt[\varphi(x+\sqrt{p}\frac{1}{\sqrt{\floor{np}}}\sum_{j=0}^{\lfloor np\rfloor}X_{j})]-\expt[\varphi(x+\sqrt{p}\frac{1}{\sqrt{\floor{np}}}\sum_{j=0}^{\lfloor np\rfloor}W_{j})]| \\
	& = |\expt[\tilde{\varphi}(\frac{1}{\sqrt{\floor{np}}}\sum_{j=0}^{\floor{np}}X_{j})]- \expt[\tilde{\varphi}(\frac{1}{\sqrt{\floor{np}}}\sum_{j=0}^{\floor{np}}W_{j})] = C_{\tilde{\varphi}} O(\frac{1}{(np)^{\alpha/2}})
\end{align*}
where $\tilde{\varphi}(a)\coloneqq\varphi(x+\sqrt{p}a)$, satisfying
\begin{align*}
|\tilde{\varphi}(a)-\tilde{\varphi}(b)| & \leq C_{\varphi}(1+|x+\sqrt{p}a|^{k}+|x+\sqrt{p}b|^{k})\sqrt{p}|a-b|\\
 & \leq C_{\varphi}(1+2^{k}(|x|^{k}+|\sqrt{p}a|^{k})+2^{k}(|x|^{k}+|\sqrt{p}b|^{k})|a-b|\\
 & \leq C_{\varphi}(1+2^{k+1}|x|^{k}+2^{k}(|a|^{k}+|b|^{k}))|a-b|\\
 & \leq C_{\tilde{\varphi}}(1+|a|^{k}+|b|^{k})|a-b|
\end{align*}
in which $C_{\tilde{\varphi}}=C_{\varphi}\max\{1+2^{k+1}|x|^{k},2^{k}\}\leq C_{\varphi}(1+2^{k}+2^{k+1}|x|^{k})$. Hence, 
\[
(2)_3 \leq C_5 (1+|x|^k) O(\frac{1}{(np)^{\alpha/2}})
\]
where $C_5=C_\varphi 2^{k+1}$. 
To summarize, with $\alpha\in(0,1)$, for a given $p$ and fixed $x$, the error is
\begin{align*}
	|u(1-p,x)-\varphi_{\floor{np},n}(x)| &\leq (1)+(2) \\
	&\leq c_x O(\frac{1}{n}) + 2C_{4}(1+|x|^{k})O(\frac{1}{n\sqrt{p}}) + C_5 (1+|x|^k) O(\frac{1}{(np)^{\alpha/2}})\\
	&=M (1+|x|^k) O(\frac{1}{(np)^{\alpha/2}}).
\end{align*}
Since $p\in(0,1)$, we only need to consider what happens when $p$ approaches to $0$, in order to get a bound similar with a uniform bound, by letting $q_n\coloneqq\frac{1}{\sqrt{n}}$, we have
\[
\sup_{p>q_n}|u(1-p,x)-\varphi_{\floor{np},n}(x)|\leq M (1+|x|^k) O(\frac{1}{(\sqrt{n})^{\alpha/2}}).
\]
\end{proof}

\begin{re} This \emph{iterative algorithm} actually allows us to not only approximate the $\expt[\varphi(X)](\approx \varphi_{n,n}(0))$ but also solve the $G$-heat equation ($u(1-p,x) \approx \varphi_{\lfloor np\rfloor,n}(x)$) for any $\varphi\in C_{l.Lip}(\R)$ without involving any PDE techniques. (In the literature, for instance, the explicit solutions in \cite{hu2009explicit} come from special PDE techniques suitable for functions like $\varphi(x)=x^{2m+1},m\in\N_+$ which can be extended to functions satisfying $\varphi(\lambda x)=\lambda^\alpha \varphi(x)$ with $\alpha>0$.)\end{re}

\subsection{Extension to the $d$-dimensional Situation}

The definition of semi-$G$-normal distribution can be naturally extended
to multi-dimensional situation. Intuitively speaking, the multivariate semi-$G$-normal distribution can be treated as an analogue of the linear multivariate normal distribution: $N(\myvec 0,\mymat{V}) = \mymat{V}^{\frac{1}{2}}N(\myvec 0,\idtymat_{d})$, where $\idtymat_{d}$ is a $d\times d$ identity matrix.  

Actually, the multivariate semi-$G$-normal distribution preserve many similar properties to classical one. 
For instance, under a special sequential independence setting, we can construct the multivariate semi-$G$-normal distribution from several univariate semi-$G$-normal distributed random variables. 
Furthermore, we can use the multivariate semi-$G$-normal distribution (constructed from univariate ones) to approach the multivariate $G$-normal distribution from the nonlinear CLT. More exploration can be found in the master's thesis \cite{Li2018Stat}. 

\begin{defn}[the Semi-$G$-normal distribution in $d$ dimension]

Let a bounded, closed and convex subset $\dvarset\subset\myset{S}_d$
be the uncertainty set of covariance matrices, i.e. 
\[
\dvarset\coloneqq\{\mymat{V}=\left(\rho_{ij}\sigma_{i}\sigma_{j}\right)_{d\times d}:\sigma_{i}^{2}\in[\sdl_{i}^{2},\sdr_{i}^{2}],\rho_{ij}=\rho_{ji}=\begin{cases}
1 & i=j\\
\in[\underline{\rho}_{ij},\overline{\rho}_{ij}] & i\neq j
\end{cases},i,j=1,2,\dotsc,d \}
\]
and $\dvarset^{\frac{1}{2}}:=\{\mymat{V}^{\frac{1}{2}}:\,\mymat{V}\in \dvarset\}$ where $\dsd$ is the symmetric square root of $\dvar$. We say a $d$-dimensional
random vector $W:\Omega\rightarrow\R^{d}$ will follow the
\emph{Semi-$G$-normal distribution}, denoted as, $W\sim\hat{N}(\myvec 0,\dvarset)$,
if there exist a $d$-dimensional $G$-normal distributed random vector
\[
Y\sim N(\myvec 0,\idtymat_d):\Omega\rightarrow\R^{d},
\]
and a
$d\times d$-dimensional maximal distributed random matrix 
\[
Z\sim M(\dsdset):\Omega\rightarrow\R^{d\times d},
\]
as well as $Y$ is independent from $Z$, such that 
\[
W=Z\cdot Y
\]
where ``$\cdot$'' is the matrix multiplication (which can be omitted) and the direction of independence here
cannot be reversed.

\end{defn}
 
\begin{re}
$Y$ can be regarded as the classical multivariate normal distribution
with identity covariance matrix. 
\end{re}

\begin{cor}[the Integral Representation of the Semi-$G$-normal distribution
in $d$ dimension] \label{integral-rep-d}

Consider a $d$-dimensional
random vector $W\sim\hat{N}(\myvec 0,\dvarset)$, where $\dvarset$ is
the uncertainty set of covariance matrices. Then for any $\varphi\in C_{l.Lip}(\R^d)$,
we have 
\begin{align*}
\expt[\varphi(W)] & =\max_{\mymat{V}\in \dvarset}E[\varphi(N(\myvec 0,\mymat{V}))] =\max_{\mymat{V}^{\frac{1}{2}}\in \dsdset}E[\varphi(\mymat{V}^{\frac{1}{2}}N(\myvec 0,\mathbb{I}_{d}))]\\
 & = \max_{\mymat{V}^{\frac{1}{2}}\in \dsdset}\int_{\R^d}
\frac{1}{(2\pi)^{\frac d2}}\exp(-\frac 12 \myvec{y}'\myvec{y})\varphi(\mymat{V}^{\frac{1}{2}}\myvec y)\diff \myvec{y}.
\end{align*}
\end{cor}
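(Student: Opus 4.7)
The plan is to mirror the proof of Theorem \ref{integral-rep}, with $Z$ now matrix-valued. First I would apply the sequential independence $Z\seqind Y$ together with the definition of independence to decompose
\[
\expt[\varphi(W)] = \expt[\varphi(ZY)] = \expt\big[\expt[\varphi(\mymat z Y)]_{\mymat z=Z}\big].
\]
Since $Y\sim \GN(\myvec 0,\idtymat_d)$ reduces to a classical $N(\myvec 0,\idtymat_d)$ (its $G$-heat equation collapses to the linear heat equation), the inner sublinear expectation coincides with a linear Gaussian expectation, so the function
\[
G(\mymat z)\coloneqq \expt[\varphi(\mymat z Y)] = \lExpt[\varphi(\mymat z Y)] = \lExpt[\varphi(N(\myvec 0,\mymat z\mymat z^{T}))]
\]
is a well-defined deterministic function of the matrix argument $\mymat z\in\R^{d\times d}$.

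Next, to apply the definition of the maximal distribution to the outer layer, I need $G\in C_{l.Lip}(\R^{d\times d})$. The argument is a direct transcription of the one-dimensional bound in Theorem \ref{integral-rep}: the local Lipschitz property of $\varphi$ yields
\[
|G(\mymat x)-G(\mymat y)|\le C_\varphi\,\lExpt\big[(1+|\mymat x Y|^{k}+|\mymat y Y|^{k})|(\mymat x-\mymat y)Y|\big],
\]
and then bounding $|\mymat x Y|\le \|\mymat x\|\,|Y|$ in an operator norm and using finiteness of the Gaussian moments of $Y$ absorbs everything into a local Lipschitz constant $C_G$ depending on $\varphi$, $k$, and moments of $Y$.

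With $G\in C_{l.Lip}$ in hand, the definition of $Z\sim\Maximal(\dsdset)$ together with boundedness, closedness and convexity of $\dsdset$ gives
\[
\expt[\varphi(W)] = \expt[G(Z)] = \max_{\mymat V^{\frac{1}{2}}\in\dsdset} G(\mymat V^{\frac{1}{2}}) = \max_{\mymat V^{\frac{1}{2}}\in\dsdset}\lExpt[\varphi(\mymat V^{\frac{1}{2}}Y)].
\]
Since $Y\sim N(\myvec 0,\idtymat_d)$ and $\mymat V^{1/2}$ is the symmetric square root, the image $\mymat V^{1/2}Y$ is classically $N(\myvec 0,\mymat V)$, giving the first two forms in the statement; plugging in the standard Gaussian density of $Y$ yields the final integral expression.

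The main obstacle I expect is the bookkeeping in the second step: one has to pin down a matrix norm, check that premultiplication by the matrix $\mymat z$ interacts cleanly with the polynomial-growth estimate, and confirm that the resulting local Lipschitz constant for $G$ is independent of the particular $\mymat z\in\dsdset$ used in the outer optimization. Everything else is a straightforward multivariate reformulation of the scalar argument.
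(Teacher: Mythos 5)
The paper states this corollary without proof, leaving it as the obvious multivariate analogue of Theorem \ref{integral-rep}. Your argument is exactly that analogue and is correct: decompose via $Z\seqind Y$, identify the inner expectation as a classical Gaussian integral since $Y\sim N(\myvec 0,\idtymat_d)$ is linear, verify $G\in C_{l.Lip}(\R^{d\times d})$ by the operator-norm estimate, and apply the maximal-distribution representation $\expt[G(Z)]=\max_{\mymat V^{1/2}\in\dsdset}G(\mymat V^{1/2})$ together with $\mymat V^{1/2}Y\sim N(\myvec 0,\mymat V)$. One small clarification on your stated worry at the end: the local Lipschitz constant $C_G$ is a single global constant depending only on $\varphi$, $k$, and moments of $Y$; there is no need for it to be ``independent of the particular $\mymat z\in\dsdset$,'' since the bound $|G(\mymat x)-G(\mymat y)|\le C_G(1+\|\mymat x\|^k+\|\mymat y\|^k)\|\mymat x-\mymat y\|$ is already uniform in $\mymat x,\mymat y$.
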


Similarly, we can obtain the iterative approximation in multi-dimensional
case.

\begin{thm}[The Iterative Approximation of the $G$-normal Distribution in $d$ dimension] \label{iterate-G} Consider a $G$-normal distributed random variable
$X\sim N(\myvec 0, \dvarset)$. For any $\varphi\in C_{l.Lip}(\R^d)$ and integer $n \geq 1$, consider the series of iteration functions 
$\{\varphi_{i,n}\}_{i=1}^n$ with initial function $\varphi_{0,n}(\myvec x)\coloneqq\varphi(\myvec x)$ and iterative relation: 
\[
\varphi_{i+1,n}(\myvec x)\coloneqq \max_{\dvar \in \dvarset}E[\varphi_{i,n}(N(\myvec x,\dvar/n))],i=0,1,\dotsc,n-1.
\]
The final iteration function for a given $n$ is $\varphi_{n,n}$. As $n\to\infty$, we have $\varphi_{n,n}(\myvec 0)\to\expt[\varphi(X)]$. 

\end{thm}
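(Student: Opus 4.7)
The plan is to follow the proof of its one-dimensional counterpart step by step, replacing the scalar variance $v^2/n$ by the covariance matrix $\dvar/n$, and relying on Corollary \ref{integral-rep-d} ($d$-dimensional integral representation of the semi-$G$-normal distribution) together with Theorem \ref{$G$-CLT} (the nonlinear CLT, already stated in general dimension).

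The first ingredient is the multivariate version of Lemma \ref{nl-CLT-semiG}. Let $\{W_i\}_{i=1}^\infty$ be an i.i.d.\ sequence with $W_i \sim \hat{N}(\myvec 0, \dvarset)$ and decomposition $W_i = Z_i Y_i$, where $Y_i \sim N(\myvec 0, \idtymat_d)$ and $Z_i \seqind Y_i$. Using this independence direction one verifies $\expt[W_i] = -\expt[-W_i] = \myvec 0$ and, for any $\mymat{A} \in \myfield{S}_d$,
\[
\tfrac{1}{2}\expt[\langle \mymat{A} W_1, W_1 \rangle] = \tfrac{1}{2}\expt[\trace(\mymat{A}\, Z_1 Z_1^\top)] = \tfrac{1}{2}\sup_{\dvar \in \dvarset}\trace(\mymat{A}\dvar) = G(\mymat{A}),
\]
so that Theorem \ref{$G$-CLT} yields $\frac{1}{\sqrt n}\sum_{i=1}^n W_i \converge{\text{d}} X \sim \GN(\myvec 0, \dvarset)$.

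Next, induct on $i$ to show $\varphi_{i,n} \in C_{l.Lip}(\R^d)$ whenever $\varphi_{0,n} = \varphi \in C_{l.Lip}(\R^d)$. For each $\myvec x$ pick a (measurable) maximizer $\dvar_{\myvec x} \in \dvarset$, let $\myvec Z \sim N(\myvec 0, \idtymat_d)$, and couple through the common $\myvec Z$ to write
\[
|\varphi_{i,n}(\myvec x) - \varphi_{i,n}(\myvec y)| \leq \lExpt\bigl[\bigl|\varphi_{i-1,n}\bigl(\myvec x + n^{-1/2}\dvar_{\myvec x}^{1/2}\myvec Z\bigr) - \varphi_{i-1,n}\bigl(\myvec y + n^{-1/2}\dvar_{\myvec y}^{1/2}\myvec Z\bigr)\bigr|\bigr].
\]
The induction hypothesis, the uniform operator-norm bound $M \coloneqq \sup_{\dvar \in \dvarset}\|\dvar^{1/2}\| < \infty$ coming from the compactness of $\dvarset$, and a multinomial expansion controlling $|\myvec x + n^{-1/2}\dvar_{\myvec x}^{1/2}\myvec Z|^k$ then produce a fresh constant $C_i$ with the required local-Lipschitz bound at the same polynomial degree $k$. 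With this in hand, unroll $\expt[\varphi(\frac{1}{\sqrt n}\sum_{i=1}^n W_i)]$ from the last summand inward using the sequential independence $(W_1,\dotsc,W_{i-1}) \seqind W_i$: at each step Corollary \ref{integral-rep-d} converts the innermost expectation into
\[
\max_{\dvar \in \dvarset}\lExpt[\varphi_{i-1,n}(N(\myvec s, \dvar/n))] = \varphi_{i,n}(\myvec s),
\]
where $\myvec s$ is the partial sum frozen from the earlier layers. After $n$ such unrollings, $\expt[\varphi(\tfrac{1}{\sqrt n}\sum_{i=1}^n W_i)] = \varphi_{n,n}(\myvec 0)$; combining with the first step gives $\varphi_{n,n}(\myvec 0) \to \expt[\varphi(X)]$.

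The main technical obstacle I anticipate is the multivariate locally-Lipschitz induction: the maximizer $\dvar_{\myvec x}$ depends on $\myvec x$ and need not be unique or continuous, so every estimate must be made uniform in $\dvar$ over $\dvarset$ by passing through the compactness-derived bound $\sup_{\dvar \in \dvarset}\|\dvar^{1/2}\|$. Once this uniform control is in place, the rest of the argument is essentially a word-for-word transcription of the scalar proof.
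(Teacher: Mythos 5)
The paper gives no proof for this statement: immediately after Corollary \ref{integral-rep-d} it says only ``Similarly, we can obtain the iterative approximation in multi-dimensional case'' and then states the theorem. Your proposal therefore supplies what the paper leaves implicit, and it does follow the intended route: verify the mean-certainty and $G$-function identification so that Theorem \ref{$G$-CLT} gives the $d$-dimensional analogue of Lemma \ref{nl-CLT-semiG}, run the local-Lipschitz induction so Corollary \ref{integral-rep-d} can be applied layer by layer, and then unroll $\expt\bigl[\varphi\bigl(n^{-1/2}\sum_{i=1}^n W_i\bigr)\bigr]$ inward to reach $\varphi_{n,n}(\myvec 0)$. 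Your computation of $\tfrac12\expt[\langle \mymat{A}W_1,W_1\rangle]=\tfrac12\sup_{\dvar\in\dvarset}\trace(\mymat{A}\dvar)$ is correct, and the unrolling step is a faithful transcription of the one-dimensional telescoping.

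However, the local-Lipschitz induction as you set it up has a genuine gap, and it is precisely the point you flagged as your anticipated ``main technical obstacle.'' You write
\[
|\varphi_{i,n}(\myvec x)-\varphi_{i,n}(\myvec y)|\leq
\lExpt\bigl[\bigl|\varphi_{i-1,n}\bigl(\myvec x+n^{-1/2}\dvar_{\myvec x}^{1/2}\myvec Z\bigr)
-\varphi_{i-1,n}\bigl(\myvec y+n^{-1/2}\dvar_{\myvec y}^{1/2}\myvec Z\bigr)\bigr|\bigr],
\]
and then intend to close the induction by bounding $\|\dvar^{1/2}\|$ uniformly over the compact set $\dvarset$. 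That uniform bound handles the polynomial factor $(1+|\cdot|^k+|\cdot|^k)$ but not the \emph{distance} factor. Applying the local-Lipschitz hypothesis to the two arguments above produces
\[
\bigl|\myvec x-\myvec y+n^{-1/2}\bigl(\dvar_{\myvec x}^{1/2}-\dvar_{\myvec y}^{1/2}\bigr)\myvec Z\bigr|,
\]
and the term $n^{-1/2}(\dvar_{\myvec x}^{1/2}-\dvar_{\myvec y}^{1/2})\myvec Z$ is not controlled by $|\myvec x-\myvec y|$ unless the maximizer map $\myvec x\mapsto\dvar_{\myvec x}$ is Lipschitz, which you correctly point out need not hold. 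Compactness bounds $\|\dvar_{\myvec x}^{1/2}-\dvar_{\myvec y}^{1/2}\|$ by a constant, but that constant does not vanish as $\myvec y\to\myvec x$, so the local-Lipschitz bound on $\varphi_{i,n}$ does not follow and the induction stalls. (The paper's one-dimensional proof has the same blemish: it writes $|(x+Z)-(y+Z)|$ in place of $|(x+\frac{v_x}{\sqrt n}Z)-(y+\frac{v_y}{\sqrt n}Z)|$.) The fix is elementary and makes the maximizer irrelevant: use the inequality $\bigl|\max_{\dvar}f(\myvec x,\dvar)-\max_{\dvar}f(\myvec y,\dvar)\bigr|\leq\max_{\dvar}\bigl|f(\myvec x,\dvar)-f(\myvec y,\dvar)\bigr|$, so that one compares $N(\myvec x,\dvar/n)$ with $N(\myvec y,\dvar/n)$ under the \emph{same} $\dvar$. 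The argument difference is then exactly $|\myvec x-\myvec y|$, and the rest of your moment estimates go through unchanged. With that substitution your proposal is correct and is, in substance, the proof the paper had in mind.
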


\section{Implementation\label{sec:Implementation}}

In this section, we will show the implementation
of the iterative algorithm which provides a feasible way to approach $\expt[\varphi(\GN(0,\varI))]$ by using $E[\varphi(N(x, v^2))]$.

\subsection{The $1$-dimensional Situation}

Consider a $G$-normal
distributed random variable $X\sim N(0,\varI)$ with $[\sdl,\sdr]=[0.5,1]$. For any $\varphi\in C_{l.Lip}(\R)$,
in order to compute $\expt[\varphi(X)]$, setting a fixed large $n$ as the total number of iterations, we implement the following procedure: 
\begin{enumerate}
\item Start from
\[
\varphi_{0,n}(x)\coloneqq\varphi(x);
\]

\item Since we are iterating the functions on the infinite domain $\R$, in practice, we need to set up a finite grid to do interpolation at each step.
Choose a large constant $K$ to decide the range of the numerical
domain of $x$ then set up the spatial grid: 
\[
-K=x_{0}<x_{1}<x_{2}<\dotsc<x_{L}=K;
\]

\item At the iteration step $i(=1,2,\dotsc,n-1)$, for each $x=x_{j}$, $j=0,1,\dotsc,L$, evaluate
\[
\varphi_{i+1,n}(x_j)\coloneqq\max_{v\in[\sdl,\sdr]}E[\varphi_{i,n}(N(x_j,\frac{v^{2}}{n}))]
\]
where the linear expectation can be computed from integration or MC (Monte Carlo) method (by generating a linearly i.i.d.~standard
normal sample: $Z_{1},Z_{2},\dotsc,Z_{M}\sim N(0,1)$): 
\begin{align*}
E[\varphi_{i,n}(N(x,\frac{v^{2}}{n}))] & =\int_{-\infty}^{+\infty}\frac{1}{\sqrt{2\pi}}\frac{\sqrt{n}}{v}\exp(-(\frac{\sqrt{n}m}{v})^{2}/2)\varphi_{i,n}(x+m)\diff m\\
 & \approx\frac{1}{M}\sum_{k=1}^{M}\varphi_{i,n}(x+\frac{v}{\sqrt{n}}Z_{k}).
\end{align*}
Then take maximum of $E[\varphi_{i,n}(N(x,\frac{v^{2}}{n}))]$ over
$v\in[\sdl,\sdr]$ to get $\varphi_{i+1,n}(x)$ by doing appropriate optimization: here we use a quasi-Newton method called the ``L-BFGS-B''
by \cite{byrd1995limited} which ``allows box constraints, that is each variable
can be given a lower and/or upper bound''. Then use $\varphi_{i,n}(x_{j})$,
$j=0,1,\dotsc,L$, to fit the function $\varphi_{i,n}$ by choosing proper splines to do both interpolation and extrapolation based on the type of $\varphi$. For instance, for $\varphi(x)=x^3$, we 
can use the ``fmm'' method by \cite{forsythe1977computer}: ``an exact cubic spline is fitted through the four points
at each end of the data, and this is used to determine the end conditions. ''
\end{enumerate}
Finally, we have 
\[
\expt[\varphi(X)] \approx\max_{v\in[\sdl,\sdr]}E[\varphi_{n-1,n}(N(0,v^{2}))] =\varphi_{n,n}(0). 
\]

\begin{com} In the step 3, we notice that it is necessary to use
these values $\varphi_{i,n}(x+\frac{v}{\sqrt{n}}Z_{k})$, $k=1,2,\dotsc,M$,
with $M$ points in the neighbourhood of one points $x$, to estimate
the $\varphi_{i,n}(x)$. If we use a larger grid (with $M^{i}$
points) for $\varphi_{i,n}$ to preserve the precision of $\varphi_{i,n}$
on a smaller grid (with $M^{i-1}$ points), we will be stuck into
the so-called ``nested situation'' unless we are dealing with functions
with bounded domain. As we increase $n$, namely, the number of iterations,
even if we only want to compute one point of the last iterative function
with certain precision, in the previous iterations, we will still
need to prepare a series of grids which is enlarged exponentially
with respect to the iteration step. Therefore, one crucial step here
is the interpolation (and extrapolation) to avoid the nested dilemma,
which can help us get a function with continuous domain from a fixed
discrete grid. The splines should be chosen based on the type of $\varphi$ (polynomial, periodic and so on). Meanwhile, we need to appropriately choose the constant
$K$ to determine the numerical domain, to make the spline model able
to \emph{capture} the pattern of the function so as to achieve the
best extrapolation and interpolation performance. \end{com}

\begin{com}
	When evaluating the linear expectation $E[\varphi_{i}(N(x,\frac{v^{2}}{n}))]$ (omitting $n$ for a while), 
	to preserve the precision regardless of $x$, we can apply the MC with control variables. 
One common problem of Monte Carlo method is that given a normal sample,
the error of the MC estimation will be enlarged when $x$ goes farther away from 
zero. Specifically, let $h\coloneqq\frac 1n$ and $Z\sim N(0,1)$, then 
\begin{eqnarray*}
 & E[\varphi_{i}(N(x,\frac{v^{2}}{n}))]\approx \frac{1}{M}\sum_{m=1}^{M}\varphi_{i}(x+\sqrt{h}v Z_{i})=\\
 & \frac{1}{M}\sum_{m=1}^{M}\left(\varphi_{i}(x)+\frac{\varphi_{i}^{(1)}(x)}{1}\sqrt{h}v Z_{i}+\frac{\varphi_{i}^{(2)}(x)}{2}hv^{2}Z_{i}^{2}+\frac{\varphi_{i}^{(3)}(\xi_{x,Z_{i}})}{6}h^{\frac{3}{2}}v^{3}Z_{i}^{3}\right)
\end{eqnarray*}
Hence, the error can be expressed as, 
\begin{eqnarray*}
 & \frac{1}{M}\sum_{m=1}^{M}\varphi_{i}(x+\sqrt{h}v Z_{i})-E[\varphi_{i}(x+\sqrt{h}v Z)]\approx\\
 & \frac{\varphi_{i}^{(1)}(x)}{1}\sqrt{h}v(\frac{1}{M}\sum_{m=1}^{M}Z_{i}-0)+\frac{\varphi_{i}^{(2)}(x)}{2}hv^{2}(\frac{1}{M}\sum_{m=1}^{M}Z_{i}^{2}-1)
\end{eqnarray*}
Suppose $\epsilon_{1}\coloneqq\frac{1}{M}\sum_{m=1}^{M}Z_{i}-0$ and
$\epsilon_{2}\coloneqq\frac{1}{M}\sum_{m=1}^{M}Z_{i}^{2}-1$, for
a given sample $\{Z_{i}\}_{i=1}^{M}$, $\epsilon_{1}$ and $\epsilon_{2}$
are fixed number, and even if we regenerate the $\{Z_{i}\}_{i=1}^{M}$
each time, the random variable $\epsilon_{1}$ and $\epsilon_{2}$ should
also not have so much variation because we know 
\[
\epsilon_{1}\sim N(0,\frac{1}{M}),\;\epsilon_{2}\sim\frac{1}{M}\chi_{(M)}^{2}
\]
then $\text{Var}[\epsilon_{1}]=\frac{1}{M}$ and $\text{Var}[\epsilon_{2}]=\frac{2}{M}$.
However, for the case $\varphi_{1}(x)=x^{3}$, when $x$ goes farther
away from zero, the values of $|\varphi_{1}^{(1)}(x)|=3|x|^{2}$ and
$|\varphi_{1}^{(2)}(x)|=6|x|$ will become larger. Therefore, without
loss of generality, for a fixed normal sample, the error of the MC will increase as $x$ goes
away from zero. This problem can not be overcome by simply increase
the $M$ (since the $|\varphi_{1}^{(1)}(x)|$ and $|\varphi_{1}^{(2)}(x)|$
will always expand the small error anyway and these
enlarged errors will be cumulated as time goes further backward). Fortunately,
we may try a type of variance reduction method for Monte Carlo method,
called the \textbf{control variables}. We can use this method
to preserve the consistent precision of MC method outside the neighbourhood
of $x$. The idea is that, after getting the $\varphi_{1}^{(i)}(x),i=1,2$, approximate the $E[\varphi(x+\sqrt{h}v Z)]$ by
\begin{eqnarray*}
 & E[\varphi(x+\sqrt{h}v Z)-\varphi(x)-\varphi(x)-\varphi^{(1)}(x)\sqrt{h}v Z-\frac{\varphi^{(2)}(x)}{2}hv^{2}Z^{2}]+\varphi(x)+\frac{\varphi^{(2)}(x)}{2}hv^{2}\\
 & \approx\frac{1}{M}\sum_{i=1}^{M}[\varphi(x+\sqrt{h}v Z_{i})-\varphi(x)-\varphi^{(1)}(x)\sqrt{h}v Z_{i}-\frac{\varphi^{(2)}(x)}{2}hv^{2}Z_{i}]+\varphi(x)+\frac{\varphi^{(2)}(x)}{2}hv^{2}\\
 &(=\frac{1}{M}\sum_{i=1}^{M}[\frac{\varphi_{i}^{(3)}(\xi_{x,Z_{i}})}{6}h^{\frac{3}{2}}v^{3}Z_{i}^{3}]+\varphi(x)+\frac{\varphi^{(2)}(x)}{2}hv^{2}).
\end{eqnarray*} 
In this way, because of the boundedness of $\varphi^{(3)}(x)(=6)$ for $\varphi(x)=x^3$, the error will not be enlarged by $\varphi_{i}^{(3)}(\xi_{x,Z_{i}})$ when $x$ moves farther away zero. 
\end{com}

 

\begin{re} Let $X\sim \GN(0,\varI)$. In the context of the $1$-dimensional $G$-heat equation:
\[
	u_t+G(\partial_x^2 u)=0,\,u|_{t=1}=\varphi
\]
where $G(a)\coloneqq \frac{1}{2}\expt[aX^{2}]=\frac 12(\sdr^2 a^+-\sdl^2 a^-)$. We know there is a natural connection between the $G$-heat
equation and $G$-normal distribution:
\[
u(t,x)=\expt[\varphi(x+\sqrt{1-t}X)].
\]
Then we can make a collection of the iterative functions with properties: 
\begin{enumerate}
\item $\varphi_{n,n}(0)\approx\expt[\varphi(X)]=u(0,0)$; 
\item $\varphi_{n,n}(x)\approx u(0,x)$; 
\item In general, $\varphi_{k,n}(x)\approx u(1-\frac{k}{n},x)$, for
$k=0,1,\dotsc,n$. 
\end{enumerate}
This will actually give us a surface of the solution $u(t,x)$ with
$t=1-\frac{k}{n},k=0,1,\dotsc,n$. \end{re}

In the following $2\times2$-layout figures, for the first row ($n=50,K=5$), Panel-$(1,1)$ is the approximated solution paths of the $G$-heat equation with $\varphi(x)=x^{3}=\varphi_{0,n}(x)$ (the black solid curve) and the curves whose right branches are moving up are the $\varphi_{i,n}$'s as the iteration proceeds. 
 The Panel-$(1,2)$ shows the approximated paths with $\varphi(x)=(1-|x|)\ind_{(-1,1)}(x)$ (which belongs to a class of functions that, previously, are hard to deal with if not applying special PDE methods). 
 The Panel-$(2,1)$ is the pathwise comparison plot ($n=100,K=50,\varphi(x)=x^3$) with the explicit solution provided in \citet{hu2009explicit} where we can only notice the error (with absolute value $\leq 0.004$) around $x=0$ and we can see the stable accuracy beyond the spatial grid $[-K,K]$ (the black dashed lines).The Panel-$(2,2)$ is the approximation of $\expt[X^3]$ as $n$ increases ($K=5$) 
(where the horizontal line labels the true value; here we use integration to compute $E$, if using MC, we will assign a variance for each point).  
\begin{center}
\includegraphics[width=\linewidth]{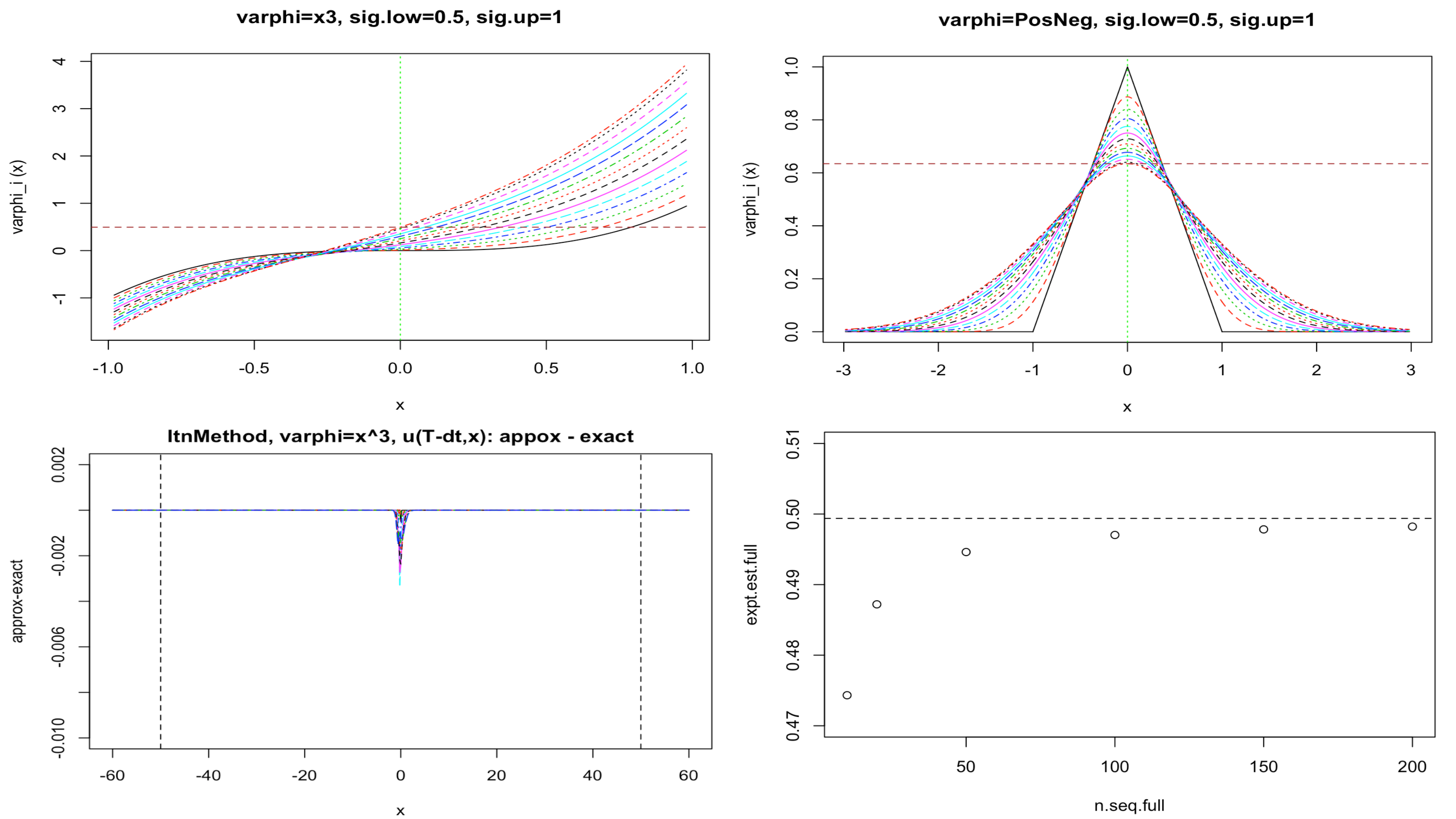} 
\par\end{center}

For curiosity, we can play with this algorithm ($n=50,K=5$) by changing the terminal function to other $\varphi\in \fspace$:  Panel-$(1,1)$ ($\varphi(x)=\sin x$), Panel-$(2,1)$ ($\varphi(x)=\cos x$), Panel-$(1,2)$ ($\varphi(x)=\sin x + \cos x$), and Panel-$(1,1)$ ($\varphi(x)=1/(1+\exp(-x^2))$).

\begin{center}
\includegraphics[width=\linewidth]{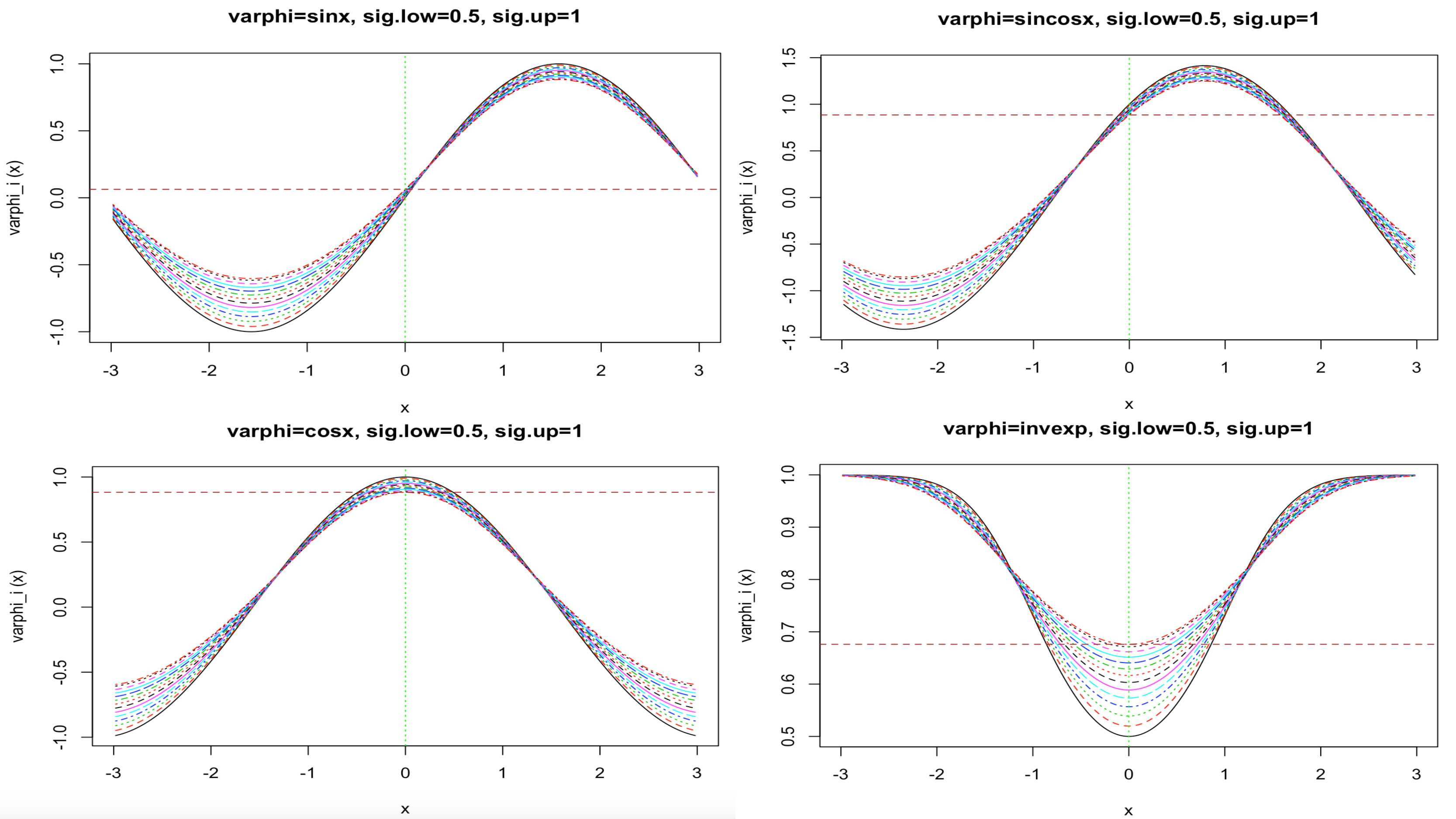} 
\par\end{center}

(I am still working on the implementation when involving the viscosity solution like the case with $\sdl=0$.)









\subsection{The $d$-dimensional Situation}

For the multi-dimensional $G$-heat equation (with covariance uncertainty), the MC setting may relieve the iterative algorithm from the curse of dimensionality. 

Let $\myvec{x}\coloneqq(x_{1},x_{2},\dotsc,x_{d})\in\R^{d}$.
Consider a $d$-dimensional
$G$-normal distributed random vector $X\sim N(\myvec{0},\dvarset)$, where
$\dvarset$ is the uncertainty set of covariance matrices. For any $\varphi\in C_{l.Lip}(\R^{d})$,
in order to compute $\expt[\varphi(X)]$, similarly, set a large $n$
as the total number of iterations, then we need to do the procedure: 
\begin{enumerate}
\item Start from 
\[
\varphi_{0,n}(\myvec{x})\coloneqq\varphi(\myvec{x});
\]

\item We need to set up a grid in the domain $\R^{d}$, to avoid
the curse of dimensionality in the sense that the rectangular grid
points become sparser in higher dimension and much more with exponential
rate ($L$ points in each dimension mean $L^{d}$ points in total). Therefore,
here we use the Monte Carlo grid points sampling from a $d$-dimensional
multivariate normal distribution: 
\[
\{\myvec{x}_{j}\}_{j=0}^{L}\sim N(\myvec{0},\sdr^{2}\idtymat_d).
\]

\item For $i=1,2,\dotsc,n$, and for each $\myvec{x}=\myvec{x}_{j}$, $j=0,1,\dotsc,L$,
let 
\[
\varphi_{i+1,n}(\myvec{x})\coloneqq\max_{\dvar\in \dvarset}E[\varphi_{i,n}(N(\myvec{x},\frac{\dvar}{n}))],
\]
where, again, to deal with the curse of dimensionality, the expectation
can be computed by Monte Carlo method which maintains its convergence
rate regardless of the dimension by generating a linearly i.i.d.~sample
from standard multivariate normal $N(0,\mathbb{I}_{d})$: $Z_{1},Z_{2},\dotsc,Z_{M}$ (and we can also apply the control variable method to protect the precision from the variation of $\myvec x$):
\[
E[\varphi_{i,n}(N(\myvec{x},\frac{\dvar}{n}))]\approx\frac{1}{M}\sum_{k=1}^{M}\varphi_{i,n}(\myvec{x}+\frac{\dvar^{\frac{1}{2}}}{\sqrt{n}}Z_{k}).
\]
Take maximum of $E[\varphi_{i-1,n}(N(\myvec{x},\frac{\dvar}{n}))]$
over $\dvar \in \dvarset$ by appropriate optimization (here
we still use the ``L-BFGS-B'' method). Then use $\varphi_{i+1,n}(x_{j})$,
$j=0,1,\dotsc,L$, to fit the function $\varphi_{i+1,n}$ by applying a proper spline model to do interpolation and extrapolation (here we work on the setting of \emph{Generalized
Additive Model} after testing and design the structure of splines based on the properties of $\varphi$). 
\end{enumerate}
Finally, we have 
\begin{align*}
\expt[\varphi(X)] & \approx\max_{\dvar^{\frac{1}{2}}\in \dsdset}E[\varphi_{n-1,n}(N(\myvec{0},\frac{\dvar}{n}))]\\
 & =\varphi_{n,n}(\myvec{0}).
\end{align*}

\begin{re} Consider a $2$-dimensional $G$-normal distributed random variable
$X\sim N(0,\dvarset)$ where 
\[
\dvarset\coloneqq\left\{ \dvar=\left(\begin{array}{cc}
\sigma_{1}^{2} & \rho\sigma_{1}\sigma_{2}\\
\rho\sigma_{1}\sigma_{2} & \sigma_{2}^{2}
\end{array}\right):\sigma_{i}\in[0.5,1],i=1,2;\rho\in[-0.5,0.5]\right\} .
\]
In the context of $2$-dimensional $G$-heat equation: 
	\[
	u_t+G(D_x^2 u)=0,\,u|_{t=1}=\varphi
	\]
	where $G(\mymat{A})\coloneqq\frac 12\expt[\langle \mymat{A}X,X \rangle]:\myfield{S}_d \to \R$,
we also have the natural connection between the $G$-heat equation
and $G$-normal distribution, 
\[
u(t,x)=\expt[\varphi(\myvec{x}+\sqrt{1-t}X)],\;(t,\myvec{x})\in[0,\infty]\times\R^{2}.
\]
Then these results have been verified: 
\begin{enumerate}
\item we have $\varphi_{n,n}(0)\approx\expt[\varphi(X)]=u(0,0)$ which
repeats the above remark; 
\item by replacing $\varphi(X)$ with a shifted version $\varphi(\myvec{x}+X)$,
we directly get $\varphi_{n,n}(\myvec{x})\approx u(0,\myvec{x})$; 
\item In general, we have $\varphi_{k,n}(\myvec{x})\approx u(1-\frac{k}{n},\myvec{x})$,
for $k=0,1,\dotsc,n$. In other words, the function in each iteration
step has its connection with the solution of $G$-heat equation. 
\end{enumerate}
\end{re}

This is the solution surface with $\varphi(\myvec{x})=x_{1}^{3}+x_{2}^{3}$,
the number of iteration steps $n=10$, and 
\[
\dvarset\coloneqq\left\{ \dvar=\left(\begin{array}{cc}
\sigma_{1}^{2} & \rho\sigma_{1}\sigma_{2}\\
\rho\sigma_{1}\sigma_{2} & \sigma_{2}^{2}
\end{array}\right):\sigma_{i}\in[0.5,1],i=1,2;\rho\in[-0.5,0.5]\right\} .
\]

\begin{center}
\includegraphics[width=1\linewidth]{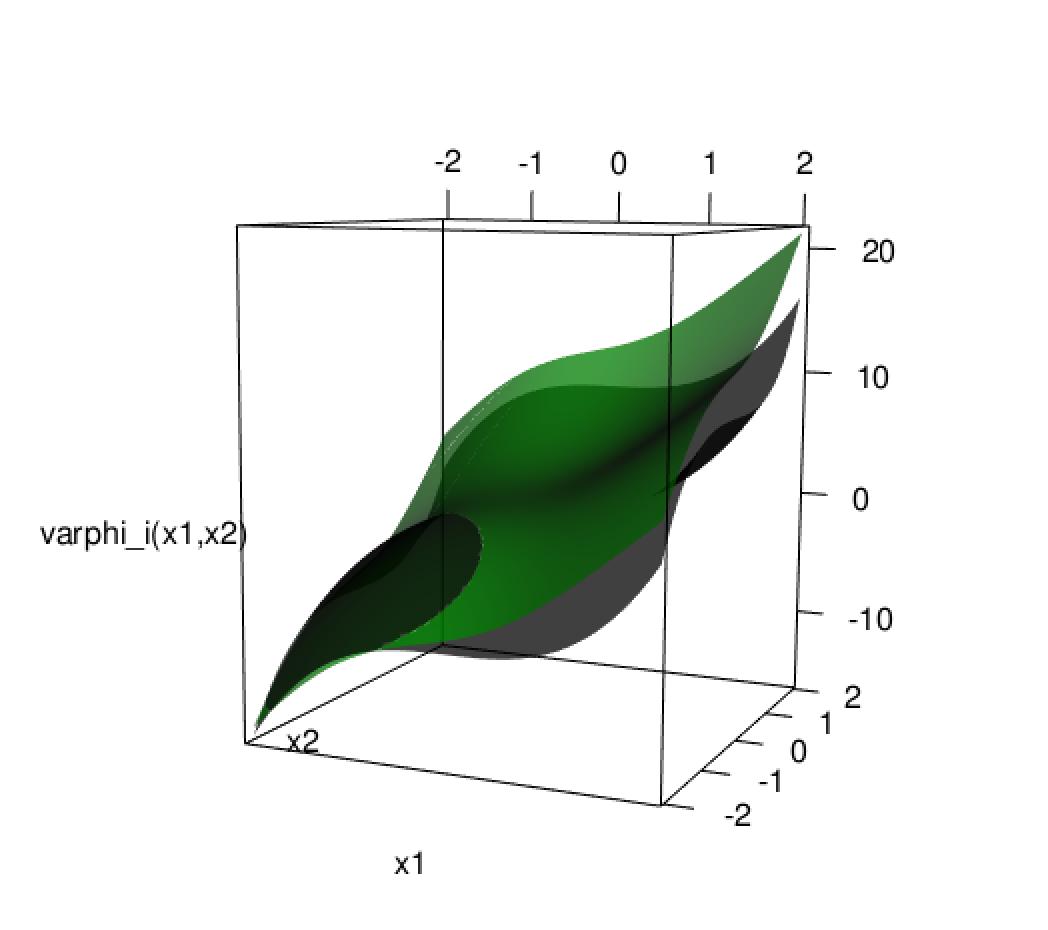} 
\par\end{center}

In the above figure, the black surface is $u(1,\myvec x)=\varphi(\myvec x)$ and the green one
is $u(0,\myvec x)$. 

(I am still working on more details for this and also the Generalized Additive Model with control variable method.)

\section{Assessment of the Iterative Method}

The strengths of this iterative method are well-worth mentioning:
\begin{enumerate}
\item It will work for any $\varphi$ applicable in the nonlinear central
limit theorem, including some irregular $\varphi$ which makes $\expt[\varphi(X)]$
difficult to compute even by using classical numerical PDE methods.
This property has been verified in the proof of Theorem \ref{iterate-G}
and has been checked numerically for one dimension. 
\item It will numerically solve the corresponding $G$-heat equation by not
just giving one point or one path but actually directly providing
the whole surface of $u(t,x)$, because the function in each iteration
step has its connection with the $u(t,x)$ at one grid point $t$.
This is given by Theorem \ref{iterate-G}; 
\item It will give us a great visualization and intuition about how the
solution surface of $G$-heat equation is evolved from the terminal function
$\varphi(x)$ by looking at the procedure of iteration, and how the
sublinear expectation of $G$-normal distribution is approached by iteratively
maximizing the linear expectation of classical normal distribution.
The inherent bridge or ladder here is the Semi-$G$-normal distribution
which helps us climb from the stage of classical normal distribution
to reach the stage of $G$-normal distribution. It partially fills in
the long-existing thinking gap between them, since we have the inequality:
$\expt[\varphi(X)]\geq\sup_{v\in\sdInt}E[\varphi(N(0,v^{2}))]$ (for
various $\varphi$, especially when $\varphi$ is neither convex nor
concave, the strict greater relation is much more frequent than equality),
which, in some sense, strictly separated the $G$-normal distribution
with the classical one and made us feel a little risky to connect
them for a long time; 
\item It can be naturally extended to higher dimension both in theory and
algorithm, since we already have the established multi-dimensional
distributions in $G$-framework, as well as the algorithm for computation
of linear expectation of classical multivariate normal distribution.
Then it can solve the corresponding multi-dimensional $G$-heat equation
attached with covariance uncertainty; 
\end{enumerate}

However, for numerical practice, we still need to reflect on how to properly choose the splines to achieve better interpolation and extrapolation performance. 

\section{Concluding Remarks and Further Development}
The semi-$G$-normal distribution proves its own value by giving us this
iterative approximation.
This is a typical distribution connecting the maximal distribution
from nonlinear framework and the classical normal distribution from
linear framework.

In the master's thesis \cite{Li2018Stat}, we further explore the semi-$G$-normal distribution by considering its independence, its multivariate version (with the construction from univariate objects) and the semi-$G$-Brownian motion (with its connection to the $G$-Brownian motion. For the statistical side, we also provide a pseudo approach to simulate the semi-$G$-normal distribution and the estimation method for the variance interval of $\semiGN(0,\varI)$. The semi-$G$-normal distribution $\semiGN(0,\varI)$ behaves like the transition from the linear normal distribution $N(0,\sigma^2)$ to the sublinear $G$-normal distribution $\GN(0,\varI)$.

The set of measures of the semi-$G$-normal distribution, consisting of a class of linear measures of $N(0,\sigma^2)$ with $\sigma\in\sdInt$, is smaller than that of the $G$-normal distribution. Hence, on the one hand, it has less unusual properties than the $G$-normal distribution and more similar properties to the linear normal distributions (e.g. its sublinear ``skewness'' is equal to zero); on the other hand, it has strong connections with $G$-normal distribution (e.g. the sublinear expectation with convexity and the nonlinear CLT). As a crucial concept in the thesis \cite{Li2018Stat}, the semi-$G$-normal distribution is aimed for constructing a bridge between the linear and the sublinear world so that in the future, more tools on the land of linear expectation framework can be transformed to the island of the sublinear expectation framework to help the larger community in both the academy and industry to understand the intuition, learn the theory or algorithms and apply them to the real world problems with uncertainty. 










%

\phantomsection 
\addcontentsline{toc}{section}{\refname}
\bibliographystyle{chicago}
\bibliography{Refs}

\end{document}